\documentclass[11pt]{article}
\usepackage{graphicx}%
\usepackage{multirow}%
\usepackage{amsmath,amssymb,amsfonts,a4wide}%
\usepackage{amsthm}%
\usepackage{mathrsfs}%
\usepackage{dsfont}%
\usepackage[title]{appendix}%
\usepackage{xcolor}%
\usepackage{textcomp}%
\usepackage{manyfoot}%
\usepackage{booktabs}%
\usepackage{algorithm}%
\usepackage{algorithmicx}%
\usepackage{algpseudocode}%
\usepackage{listings}
\usepackage{tikz}%

\theoremstyle{plain}%
\newtheorem{theorem}{Theorem}[section]
\newtheorem{proposition}[theorem]{Proposition}%
\newtheorem{lemma}[theorem]{Lemma}%
\newtheorem{corollary}[theorem]{Corollary}%

\theoremstyle{definition}%
\newtheorem{example}[theorem]{Example}%
\newtheorem{remark}[theorem]{Remark}%
\newtheorem{definition}[theorem]{Definition}%

\numberwithin{equation}{section}

\newcommand{\cB}{{\cal B}}

\newcommand{\cF}{{\cal F}}

\newcommand{\cK}{{\cal K}}
\newcommand{\cL}{{\cal L}}

\newcommand{\cO}{{\cal O}}
\newcommand{\cP}{{\cal P}}

\newcommand{\cX}{{\cal X}}
\newcommand{\cY}{{\cal Y}}
\newcommand{\cZ}{{\cal Z}}

\newcommand{\Id}[1]{\mathds{1}_{#1}}
\newcommand{\GR}{\mathds{R}}

\newcommand{\GN}{\mathds{N}}
\newcommand{\GH}{\mathds{H}}
\newcommand{\GD}{\mathds{D}}

\newcommand{\GI}{\mathds{I}}

\newcommand{\bP}{\mathbb P}
\newcommand{\bE}{\mathbb E}

\newcommand{\zB}{\mathscr{B}}
\newcommand{\zA}{\mathscr{A}}
\newcommand{\zK}{\mathscr{K}}
\newcommand{\sr}{\,|\,}

\newcommand{\conver}{\mathop{\longrightarrow}}

\newcommand{\conv}{\conver}

\newcommand{\intau}{\ \conver_{\tau  }\ }

\newcommand{\bequ}[1]{\begin{equation}\label{#1}}
	\newcommand{\eequ}{\end{equation}}

\newcommand{\starindistdisttop}[1]{\ {\stackrel{*}{\overrightarrow{\longrightarrow}}_{\mathcal{D}(#1)}}\ }
\newcommand{\indistdisttop}[1]{\ {\stackrel{\phantom{*}}{\overrightarrow{\longrightarrow}}_{\mathcal{D}(#1)}}\ }

\begin{document}
	
\title{The Riesz representation in submetric spaces\\ and its probabilistic aspects}


\author{Adam Jakubowski\footnote{E-mail:adjakubo@mat.umk.pl}\\[3mm]
Faculty of Mathematics and Computer Science\\
Nicolaus Copernicus University, Toruń, Poland}
\date{}

\maketitle

\begin{abstract} A {\em submetric space} is a topological space equipped with a continuous metric that generates a metric topology (usually) weaker than the original one (e.g.,  a~separable Hilbert space with the weak topology). We show that a positive linear functional on the space of bounded continuous functions on a submetric space is given as an integral with respect to a tight measure if, and only if, it is continuous in the strict topology.  Neither local compactness nor complete regularity is required.

We link the equicontinuity of a family of functionals in the strict topology to the uniform tightness of the underlying measures, and we provide a probabilistic representation (Skorokhod's parametrization) for such families. Finally, we identify the finest topology on the space of probability measures on a submetric space in which uniform tightness implies relative compactness. In non-metric spaces this topology is strictly stronger than the topology of weak convergence of probability measures.
\end{abstract}

\noindent {\em Keywords:} Riesz representation, submetric spaces, strict topology, uniform tightness, Skorokhod's parametrization, Prohorov theorem.

\noindent{\em MSClassification 2020:} 28C05, 60B05, 60B10, 54A20, 54E99.

\tikzset{every picture/.style={line width=0.75pt}} 

\section{Introduction}\label{sec1}

The Riesz representation \cite{Riesz09} is one of the cornerstones of the functional analysis. It asserts that (in contemporary terminology and notation) any linear and bounded functional $F$ on the space $C([0,1])$ of continuous functions on the unit interval $[0,1]$ is, in fact, an integral with respect to a signed measure on Borel subsets of $[0,1]$. This form of the Riesz representation is usually presented in an equivalent though apparently weaker statement: if $F\,:\, C([0,1]) \to \GR^1$ is linear, {\em positive} (i.e. $F(f) \geq 0$ whenever $f \geq 0$) and {\em normalized} (i.e. $F(1) = F(\Id{[0,1]}) = 1$) then 
\begin{equation}\label{eq:1}
 F(f) = \int_{[0,1]} f(x)\,\mu(dx), \quad f\in C([0,1]),
\end{equation}
for some (unique) {\em probability} measure $\mu$ on $\zB_{[0,1]}$. 

There is a huge literature devoted to generalizations and consequences of the Riesz representation (e.g., Riesz spaces, Banach lattices, pre-Riesz spaces, etc.). Here we shall focus on the simplest thread, related to extending  (\ref{eq:1}).

Within the first half of the XX century, the Riesz representation had been developed in three main directions.

First, Radon in his seminal paper \cite{Rad13} considered functionals of continuous functions with compact support in Euclidean spaces. This heading was completed by  
 Halmos in \cite[\S 56]{Halm50} where he gave a theory for locally compact spaces. See, e.g., \cite{Mez07} for contemporary variations on this topic. 
 
 In parallel, first Markoff \cite{Mar38} and then Alexandroff  \cite{Alex40}, discussed an abstract representation  \begin{equation}\label{eq:2}
 	F(f) = \int_{\cX} f(x)\,\mu(dx), \quad f\in CB(\cX,\tau),
 \end{equation}
 where $(\cX,\tau)$ is a topological space, $CB(\cX,\tau)$ is the space of $\tau$-continuous and {\em bounded} functions on $(\cX,\tau)$ and $\mu$ is a set function allowing integration of functions $f\in CB(\cX,\tau)$. Markoff and Alexandroff obtained representation (\ref{eq:2}) for normal spaces $(\cX,\tau)$ and finitely additive set functions $\mu$.
 
 Finally, Saks \cite{Saks38}, developing some formalism by Banach \cite{Ban37}, obtained (\ref{eq:2}) for metric compacts $(\cX,\tau)$ and Borel measures $\mu$. The crucial step was done by Kakutani \cite{Kak41}, who derived the representation (\ref{eq:2}) for general compacts $(\cX,\tau)$.

It is important for the present paper to stress the inspiring role of John von Neumann in the development of this theory. Markoff \cite[p. 166]{Mar38} acknowledges the influence of \cite{vNeu35}. Kakutani \cite[p. 1009]{Kak41} directly refers to ``Princeton lectures" by von Neumann \cite{vNeu41}. And Halmos made notes from these lectures and the idea of defining measures ``from compacts to Borel sets" can be easily traced  in his important book. 

  With Kakutani's result and the progress of general topology in the thirties of XX century, it was natural to extend the theory of (\ref{eq:2}) to {\em Tychonoff spaces} (completely regular plus $T_1$ spaces). Roughly speaking, any Tychonoff space $(\cX,\tau)$ can be identified with a subset  of  some Tychonoff cube and bounded continuous functions on $(\cX,\tau)$ can be extended to the Stone-\v{C}ech compactification $\beta \cX$ of $\cX$. Then Kakutani's result works; the problem is that the obtained measure is defined on Borel subsets of $\beta \cX$ and after the restriction to $\cX$ it is only finitely additive. 

It was Buck \cite{Buck52}, \cite{Buck58} who realized (in the framework of locally compact spaces) that the measure $\mu$ in the integral on the right-hand side of (\ref{eq:2}) is {\em $\tau$-tight} (i.e. $\sup \{ \mu(K)\,;\, K\in \cK_{\tau} \} = 1$, where $\cK_{\tau}$ is the family of $\tau$-compact sets) if, and only if, the integral is continuous in {\em the strict topology}.  The latter means that there exists a function $v \geq 0$ which is ``vanishing at infinity" (i.e., bounded and satisfying  $\{x\sr v(x) \geq \varepsilon\} \in \cK_{\tau}$, for every $\varepsilon > 0$) and such that
\begin{equation}\label{eq:3}
	\big| \int_{\cX} f(x)\,\mu(dx)\big| \leq \sup_{x\in\cX} |f(x)| v(x) = \|f v\|_{\infty}, \quad f\in CB(\cX,\tau).
\end{equation}
In what follows we will denote the family of functions vanishing at infinity on $(\cX,{\tau}) $ by $V_{\infty}(\tau)$. Buck's idea was rediscovered by probabilists in complete and separable metric spaces (\cite[Theorem 1, p.315]{GiSk74}, see also \cite[Appendix A]{LaSz06}). Summarizing, our problem (\ref{eq:2}) admits a $\tau$-tight solution when $(\cX,\tau)$ is a Tychonoff space and the functional $F$ is continuous in the strict $\tau$-topology (see \cite[Theorem 3, p. 141]{Jar81}). Relatively recent developments of the theory within this framework can be found, e.g., in \cite{Now16}.

At this point it might be instructive to recall two statements from the important paper \cite{Sent72} by Sentilles.  On p. 311 he wrote: ``(...) extend this kind of representation to its last reasonable setting, completely regular Hausdorff spaces. This setting appears to be ultimate in the sense of Hewitt's example of a regular space upon which the only continuous functions are constant". And on p. 313 we can read ``the restriction of tightness is too strong, for the compacta in $\cX$ generate neither $\text{Ba}(\cX)$ nor $\text{B}(\cX)$" ($\text{Ba}(\cX)$ are ``Baire sets" and $\text{B}(\cX)$  are ``Borel sets''). These sentences apparently look correct, and have not been questioned yet.

We shall show in the present paper that the Riesz representation for strictly continuous functionals holds in a large class of topological spaces, so-called submetric spaces, which need not even be regular, yet still admit rich families of continuous functions.  With the formalism of submetric spaces we are able to avoid employing the full power of the Axiom of Choice and achieve the aim of maximal simplicity. In particular, we remain in the world of tight measures only, which is the most important one for applications. 

A topological space  $\big(\cX,\tau\big)$ is called {\em submetric}, if the topology $\tau$ (i.e., the family of $\tau$-open sets $\cO_{\tau}$) contains  a metric topology $\tau_d$ generated by some metric $d$ on $\cX$. The term was coined in \cite{Grue84}. However, that article is of limited relevance from our perspective, as it focuses on how to eliminate the metric from the definition; by contrast, we demonstrate how much can be gained from the existence of a continuous metric $d$ on $(\cX,\tau)$.
The position of submetric spaces within the hierarchy of classes of topological spaces can be described by the following diagram.
\begin{center}
	\begin{tikzpicture}[x=0.75pt,y=0.75pt,yscale=-1,xscale=1]

	\draw (70,125) node [anchor=north west][inner sep=0.75pt]   [align=left] {MS};
	\draw (98,125) node [anchor=north west][inner sep=0.75pt]    {$\subset $};
	\draw (120,125) node [anchor=north west][inner sep=0.75pt]   [align=left] {SS};
	\draw (140,125) node [anchor=north west][inner sep=0.75pt]    {$\cap $};
	\draw (155,125) node [anchor=north west][inner sep=0.75pt]   [align=left] {TS};
	\draw (180,122) node [anchor=north west][inner sep=0.75pt]  [rotate=30]  {$\subset $};
	\draw (185,130) node [anchor=north west][inner sep=0.75pt]  [rotate=-30]  {$\subset $};
	\draw (200,112) node [anchor=north west][inner sep=0.75pt]   [align=left] {SS};
	\draw (200,138) node [anchor=north west][inner sep=0.75pt]   [align=left] {TS};
	\draw (224,137) node [anchor=north west][inner sep=0.75pt]  [rotate=30]  {$\subset $};
	\draw (228,113) node [anchor=north west][inner sep=0.75pt]  [rotate=-30]  {$\subset $};
	\draw (247,125) node [anchor=north west][inner sep=0.75pt]   [align=left] {HS};
\end{tikzpicture}
\end{center}
In this diagram MS means ``metric spaces'', SS $=$ ``submetric spaces'', TS $=$ ``Tychonoff spaces'', HS $=$ ``Hausdorff spaces''. Examples showing that each of the above inclusions is strict, can be found in \cite[Section 2]{Jak26}. A list of basic properties of submetric spaces (e.g., compactness $=$ sequential compactness!) which make the class of submetric spaces particularly useful in probabilistic considerations, is given in \cite[Section 3]{Jak26}. 

The family of submetric spaces is really large.
In Example \ref{ExSmir} we discuss the case of Smirnov space, which is naturally submetric, but is not regular. In Example 
\ref{ExHilb} we discuss  the weak topology on a separable Hilbert space $(\GH,\tau_w)$. This example is typical by two reasons. First, it is representative for weak topologies on linear topological spaces. Second,  $(\GH,\tau_w)$ is a submetric space, because there exists a countable family of continuous functions which separate points (``CCSP'') in $\GH$ , e.g., the scalar products $\{f_i(x) =\langle x,e_i\rangle\,:\, i\in \GN\}$, where $\{e_i\}$ is an orthonormal basis in $\GH$. Given such a family $\{f_i\}$ one can define
 a {\em continuous} metric on $\GH$ by the formula
 \begin{equation}\label{eq:4}
 	d(x,y) = \sum_{i=1}^{\infty} \frac{1}{2^i} \frac{|f_i(x) - f_i(y)|}{1 + |f_i(x) - f_i(y)|}.
 \end{equation} 
  This recipe for constructing a submetric structure on a topological space is highly flexible (where applicable) and is frequently used in various configurations in solving existence problems for stochastic partial differential equations (SPDEs) and establishing the existence of invariant measures. We shall demonstrate its applicability in Section \ref{SecLaw}, where we show that the space of tight probability measures on a submetric space can be endowed with a submetric topology.
 
 Historically,  this procedure was first applied to its full extent in the author's paper \cite{Jak97NS}, which introduced the so-called $S$ topology on the Skorokhod space  $\GD([0,1])$ of real c\`adl\`ag functions on $[0,1]$.  The $S$ topology is defined via convergent sequences, it is non-metric but submetric, and it admits tractable criteria for relative compactness. As it is sequential but non-metric, it remains unknown whether $(\GD([0,1]), S)$ is completely regular or even regular.
 
 In fact, an application of the $S$ topology to some problems of financial mathematics, given in \cite{CKPS21}, was a motivation for the present paper. In \cite{CKPS21}, the authors needed the Riesz representation on $(\GD([0,1]), S)$. Because of the lack of regularity, they defined a new topology $S^*$ on $\GD([0,1])$ that $1^o$ converted the Skorokhod space into a Tychonoff space and $2^o$ preserved the family of $S$-compact sets. The choice of $S^*$ was quite natural - it was the topology generated by all $S$-continuous (hence sequentially continuous) functions on $\GD([0,1])$. While bringing the solution, this procedure is rather indirect and involves the Tychonoff/Stone/\v{C}ech machinery related to the Axiom of Choice. Furthermore, Example \ref{ExSmir} shows that in some submetric spaces the step $2^o$ does not hold, so in general the method of \cite{CKPS21} cannot be directly repeated. Nevertheless, after resolving some subtleties, it is possible to prove our Theorem \ref{main} following the line of \cite{CKPS21}. See Remark \ref{rem:Kiiski} for all details.
 
 We prefer a direct proof of the Riesz representation theorem in submetric spaces, as it highlights the untapped potential of the von Neumann–Halmos measure construction method. Moreover, it suggests that complete regularity (which is necessary for handling closed sets) is irrelevant in certain problems on submetric spaces, as compact sets are sufficient.

The paper is organized as follows. In Section \ref{SecMain} we formulate and prove a Riesz-type representation for submetric spaces (Theorem \ref{main}). We also study the example of the Smirnov space in detail. 

In Section \ref{SecUT}, we connect the property of uniform tightness of a family of probability measures to the equicontinuity in the strict topology of the corresponding integral functionals. The example of a separable Hilbert space with the weak topology is basic for this section. As an extra tool we provide a probabilistic coupling of the uniformly tight family of probability measures — the so-called Skorokhod parametrization (Theorem \ref{th:SkPar}). This result is, in fact, the strong version of the direct Prohorov theorem for submetric spaces. In Remark \ref{SCM} we briefly refer to the Stochastic Compactness Method in SPDEs - the main area of application of the machinery developed in this section.

In Section \ref{SecLaw}, we define a sequential topology on the set of tight probability measures on a submetric space. This topology is itself submetric and coincides with the usual weak convergence of tight probability measures on metric spaces. In general, however, it is strictly stronger than weak convergence, as demonstrated by the classic counterexample due to Fernique \cite{Fern67}. The uniform tightness of a family of probability measures is sufficient for relative compactness in this topology (the direct Prohorov theorem). Crucially, in many settings (e.g., on $(\GH,\tau_w)$), this condition is also necessary (the converse Prokhorov theorem in submetric spaces). 

In Appendix \ref{ApHalm}, we carefully examine the von Neumann–Halmos content method for generating measures and show that, at a basic level, it holds in Hausdorff spaces.

In Appendix \ref{sec:tight}, we prove that within the context of submetric spaces, any tight measure is both regular and Radon; thus, there is no need to distinguish between these notions in our paper (see Proposition \ref{prop: unif}).

Appendix \ref{Ap:RandEl} contains definitions and results related to random elements taking values in a submetric space.

\section{The Riesz representation in submetric spaces}\label{SecMain}

\begin{theorem}\label{main}
Let $(\cX,\tau)$ be a submetric space and let $CB(\cX,\tau)$ be the space of bounded continuous functions on  $(\cX,\tau)$.

Let $\Lambda: CB(\cX,\tau) \to \GR^1$ be a positive linear functional, satisfying
$\Lambda(\Id{\cX}) = 1$.

Then there exists a $\tau$-tight probability measure $\mu$ defined on Borel sets $\zB_{\tau}$ and such that 
\begin{equation}\label{eq:main1}
	 \Lambda(f) = \int_{\cX} f(x)\,\mu(dx),\ f \in CB(\cX,\tau),
\end{equation}
if, and only if, $\Lambda$ is continuous in the strict $\tau$-topology, i.e., there exists a function $v \in V_{\infty}(\tau)$ such that for each $f \in CB(\cX,\tau)$ 
\begin{equation}\label{eq:main2} |\Lambda(f)| \leq \sup_{x\in\cX} |f(x) v(x)|\ (= \|fv\|_{\infty}).
\end{equation}

The measure $\mu$ is unique.
\end{theorem}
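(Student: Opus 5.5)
The plan is to prove the two directions separately and then uniqueness, with the von Neumann--Halmos content construction (Appendix \ref{ApHalm}) as the main tool for existence.

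\emph{Necessity.} Suppose $\mu$ is $\tau$-tight. Choose $\tau$-compact sets $K_1\subset K_2\subset\cdots$ with $\mu(K_n^c)\le 4^{-n}$ and put
\[
v=\sum_n 2^{-n}\Id{K_n^c}+\Id{\cX}\cdot 0 ,
\]
suitably modified so that $v$ dominates the mass outside each $K_n$. For instance, take $v(x)=2^{-n}$ on $K_{n+1}\setminus K_n$ and $v=1$ on $K_1$, scaled by a constant. Then $\{v\ge\varepsilon\}\subset K_n$, which is compact, so $v\in V_\infty(\tau)$; $v$ need not be continuous for the definition as stated. Next estimate
\[
\Bigl|\int f\,d\mu\Bigr|\le \sum_n \|f v\|_\infty\, 2^{n}\,\mu(K_{n+1}\setminus K_n)\le C\|fv\|_\infty ,
\]
and absorb $C$ into $v$.

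\emph{Sufficiency.} Assume \eqref{eq:main2}. For a $\tau$-compact set $K$, define the content
\[
\lambda(K)=\inf\{\Lambda(f) : f\in CB,\ f\ge \Id{K}\}.
\]
Compact sets in a submetric space are $d$-compact, and on them $\tau$ and $\tau_d$ coincide, since a continuous bijection from a compact space onto a Hausdorff space is a homeomorphism. Hence the functions $f_\delta(x)=(1-d(x,K)/\delta)^+$ are $\tau$-continuous and separate $K$ from $d$-open neighbourhoods. Using these functions I would verify that $\lambda$ is monotone, finitely subadditive, additive on disjoint compacts (disjoint compacts have positive $d$-distance), and regular in the sense $\lambda(K)=\inf\{\lambda(L): K\subset \mathrm{int}L\}$. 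The appendix then yields a Borel measure $\mu$ with $\mu(K)=\lambda(K)$ that is inner regular by compacts. The key use of \eqref{eq:main2} is the bound
\[
\Lambda(1-f)\le \|(1-f)v\|_\infty\le\varepsilon \quad\text{when } 0\le f\le1 \text{ and } f=1 \text{ on } K_\varepsilon=\{v\ge\varepsilon\}.
\]
This gives $\lambda(K_\varepsilon)\ge 1-\varepsilon$, so $\mu$ is a tight probability measure; $\Lambda(\Id{\cX})=1$ normalizes it.

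\emph{Identifying $\Lambda$ with $\int f\,d\mu$.} Fix $f\in CB$ with $0\le f\le 1$. Partition the range into levels $t_i=i/N$ and restrict to a compact $K_\varepsilon$. On $K_\varepsilon$ the function $f$ is uniformly $d$-continuous, so the standard Riesz layer-cake argument comparing $\Lambda(f)$ with $\sum_i N^{-1}\lambda(\{f\ge t_i\}\cap K_\varepsilon)$ goes through, using the $f_\delta$-approximations. The error off $K_\varepsilon$ is controlled by \eqref{eq:main2} as $\le\varepsilon\|f\|_\infty$.

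\emph{Main obstacle.} This identification step is the hard one. $\Lambda$ is only defined on continuous functions, not on indicators, so one needs $\Lambda(g)\le\lambda(K)$ whenever $g\le \Id{K}$, which requires approximating $\Id{K}$ from above by continuous functions. I would handle it via the Dini-type fact that $f_\delta\downarrow \Id{K}$, together with strict continuity, which makes $\Lambda$ $\sigma$-smooth on uniformly bounded decreasing sequences: on $K_\varepsilon$ the convergence is uniform off small $d$-neighbourhoods of $K$, and the remainder is bounded by \eqref{eq:main2}.

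\emph{Uniqueness.} Two tight measures agreeing on $CB$ agree on every compact set, since $\mu(K)=\lim\int f_\delta\,d\mu$ by dominated convergence. By inner regularity (Proposition \ref{prop: unif}) they then coincide on $\zB_\tau$.
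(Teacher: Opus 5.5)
Your necessity direction (with the concrete choice of $v$; the first displayed formula does not vanish at infinity), the tightness bound $\lambda(K_\varepsilon)\ge 1-\varepsilon$, and the uniqueness argument are fine and match the paper.

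\textbf{The gap is in the construction of $\mu$.} You propose to show that $\lambda$ is regular, i.e.\ $\lambda(K)=\inf\{\lambda(L): K\subset \mathrm{int}\,L\}$, and then read off from the appendix a Borel measure with $\mu(K)=\lambda(K)$. That regularity is exactly the locally compact ingredient of Halmos' theory which the appendix drops, and it fails here. In $(\GH,\tau_w)$ weakly compact sets are norm bounded, while nonempty weakly open sets are unbounded. So every compact set has empty interior, and for $K\neq\emptyset$ the family $\{L: K\subset\mathrm{int}\,L\}$ is empty.

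What Theorem~\ref{apmain} actually delivers is a measure on $\sigma(\cK_\tau)$ with $\lambda\le\mu$ only. This breaks your identification step. The comparison $\Lambda(f)\ge\sum_i N^{-1}\lambda(\{f\ge t_i\}\cap K_\varepsilon)$ is trivial by positivity. But it becomes $\Lambda(f)\ge\int f\,d\mu-\text{small}$ only if $\mu\le\lambda$ on compacts, which is precisely the inequality you have not got. Relatedly, the ``main obstacle'' you name is not one: $\Lambda(g)\le\lambda(K)$ for continuous $g\le\Id{K}$ is immediate from positivity. Also, the textbook layer-cake/partition-of-unity argument needs Urysohn functions for compact sets inside open sets, and these need not exist in submetric spaces (cf.\ $\{0\}\subset[0,1]\setminus\cZ$ in Example~\ref{ExSmir}).

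\textbf{The missing idea is a substitute for regularity.} There are three ways to supply it.
\begin{itemize}
\item The paper's route uses $\mu(U)=\mu^*(U)=\lambda_*(U)\le\Lambda(g)$ for open $U$ and $g\ge\Id{U}$. A layer-cake over the \emph{open} sets $\{f>t_i\}$ then gives $\int f\,d\mu\le\Lambda(f)$ for $f\ge 0$. Applying this to $C\pm f$, together with $\mu(\cX)=1$, yields equality; the functional $\Theta=\Lambda-\int\cdot\,d\mu$ also gives $\lambda=\mu$ on $\cK_\tau$.
\item A direct argument: for $g\ge\Id{K}$ and $t<1$ one has $K\subset\{g>t\}$, so $\mu(K)\le\lambda_*(\{g>t\})\le\Lambda(g)/t$. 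Hence $\mu\le\lambda$.
\item Alternatively, keep only $\lambda\le\mu$ and prove the upper bound. Take $f_i=(f-t_{i-1})^+\wedge N^{-1}$, $C_i=\{f\ge t_{i-1}\}\cap K_\varepsilon$ and $h\ge\Id{C_i}$. The function $(Nf_i-h)^+$ vanishes on $K_\varepsilon$, so $N\Lambda(f_i)\le\Lambda(h)+\varepsilon$ by \eqref{eq:main2}. Summing gives $\Lambda(f)\le\int f\,d\mu+1/N+\varepsilon$; then apply this to $1-f$.
\end{itemize}
Finally, the appendix measure lives on $\sigma(\cK_\tau)$, not on $\zB_\tau$. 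Passing to $\zB_\tau$ needs the concentration of $\mu$ on $\bigcup_n K_{1/n}$ (Proposition~\ref{prop: unif}); the appendix alone does not give it.
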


  \begin{proof}
Let $\mu$ be a $\tau$-tight probability measure on a submetric space $\big( \cX, \tau)$. We essentially follow \cite{Buck58}, and literally \cite[p. 142]{Jar81}. Choose {\em compact} subsets $K_n \subset \cX$ such that $K_n \subset K_{n+1}$ and 
\begin{equation}
	\mu\big( K_n \big) > 1 - 4^{-n-1},\quad n = 1,2, \ldots,
\end{equation}
and let $K_0 = \emptyset$. Define
\begin{equation}
	v(x) = \begin{cases} 2^{-n} &\text{if $x\in K_n\setminus K_{n-1}$}\\
		0&\text{if $x\not\in \bigcup_{n=1}^{\infty} K_n$}.
	\end{cases}
\end{equation}
Clearly, $v(x) \in V_{\infty}$  Then we have for $f\in CB(\cX,\tau)$
\begin{align}
	\big|\int_{\cX} f(x) \mu(dx)\big| &= \big| \int_{ \bigcup_{n=1}^{\infty} K_n} f(x) \mu(dx)\big| \leq  \int_{ \bigcup_{n=1}^{\infty} K_n} |f(x)|v(x) \frac{1}{v(x)}\mu(dx)  \nonumber \\
	& \leq \sup_{x\in\cX} |f(x)v(x)| \cdot   \int_{\bigcup_{n=1}^{\infty} K_n} \frac{1}{v(x)}\mu(dx) \nonumber\\
	&= \|f\cdot v\|_{\infty} \cdot \sum_{n=1}^{\infty} \int_{K_n\setminus K_{n-1}}\frac{1}{v(x)} \mu(dx)\nonumber\\
	&\leq \|f\cdot v\|_{\infty} \cdot \sum_{n=1}^{\infty} 2^n \mu(K_{n-1}^c) < \|f\cdot v\|_{\infty} \cdot \sum_{n=1}^{\infty} 2^n \frac{1}{4^{n}} = \|f\cdot v\|_{\infty}.\label{nes}
\end{align}
In other words, the linear functional $CB(\cX,\tau) \ni f \mapsto \Lambda_{\mu}(f) =  \int_{\cX} f(x) \mu(dx)$ is continuous in the {\em strict} topology.  

In order to prove the converse, we shall adapt the classic formalism given in \cite[Chpt. X]{Halm50} for locally compact spaces. 

Let $\Lambda : CB(\cX,\tau) \to \GR^1$ be linear, positive, $\Lambda(\mathds{1}_{\cX}) = 1$,  and satisfies 
\[ |\Lambda(f)| \leq \sup_{x\in\cX} |f(x)v(x)|,\]
where $v\in V_{\infty}$. Recall that $0 \leq v(x) \leq \sup_{y\in\cX} v(y) = \|v\|_{\infty} < +\infty$ and $\{x\sr v(x) \geq \varepsilon\} \in \cK_{\tau}$, for each $\varepsilon > 0$.

Let us define a function $\lambda$ on $\tau$-compact subsets $\cK_{\tau}$ of $\cX$.  
\[ \lambda(K) = \inf\{ \Lambda(f)\sr f\in CB(\cX,\tau), f \geq  \Id{K}\},\quad K\in \cK_{\tau}.\] 
We will show that $\lambda$ is a $\tau$-content in the sense of Definition \ref{halmcont}. Obviously $\lambda$ is non-negative, bounded by $1$ (so finite), monotone and $\lambda(\emptyset) = 0$. Take $K_1, K_2 \in \cK_{\tau}$ and let $f_1, f_2 \in CB(\cX,\tau)$ be such that $f_1 \geq \Id{K_1}, f_2 \geq K_2$. Then $f_1 + f_2 \geq \Id{K_1 \cup K_2}$ and so 
$ \lambda(K_1\cup K_2) \leq \Lambda(f_1) + \Lambda(f_2)$. It follows that
\[ \lambda(K_1\cup K_2) \leq \inf \{\Lambda(f)\sr f\geq \Id{K_1} \} +  \inf \{\Lambda(g)\sr g\geq \Id{K_2} \} = \lambda(K_1) + \lambda(K_2).\]
Hence we have the subadditivity of $\lambda$. To proceed further we need a lemma.
\begin{lemma}\label{Separ}
	Let $K_1$ and $K_2$ be {\em disjoint} $\tau$-compact subsets of a submetric space $\big( \cX, \tau)$. Then there exists a continuous function $h : \cX \to [0,1]$ such that $ h(x) = 1$ on $K_1$ and $h(x) = 0$ on $K_2$. \end{lemma}
\begin{proof}[Proof of Lemma~{\upshape\ref{Separ}}] Take any metric $d$ which is $\tau$-continuous and let $\tau_d$ be the metric topology generated by $d$.
	Let $K_1, K_2 \subset \cX$ be $\tau$-compact. Then they are also $\tau_d$-compact. If $K_1$ and $K_2$ are disjoint, then for each $x \in \cX$ we have $d(x, K_1) + d(x,K_2) > 0$ and we may define 	
	\begin{equation}\label{funcha} h(x) = \frac{d(x,K_2)}{d(x, K_1) + d(x,K_2)}.
	\end{equation}

\end{proof}   
\begin{remark}
	If $K_1$ and $K_2$ are disjoint compact sets in a Hausdorff space, then there are {\em disjoint} open sets $U_1 \supset K_1$ and $U_2 \supset K_2$. In submetric spaces the separation of $K_1$ and $K_2$ can be directly obtained by setting $U_1 = \{ x\,:\,h(x) > 1/2\}$ and $U_2 = \{ x\,:\, h(x) < 1/2\}$, where $h$ is the function from Lemma~{\upshape\ref{Separ}}. 
\end{remark}
Now assume that $K_1 \cap K_2 = \emptyset$ and let $h$ be the function given by the above lemma. Choose $f \geq \Id{K_1 \sqcup K_2} = \Id{K_1} +  \Id{K_2}$. Then $f\cdot h \geq \Id{K_1}$ and $f\cdot (1- h) \geq \Id{K_2}$ and therefore 
\[ \lambda(K_1) + \lambda(K_2) \leq \Lambda(f\cdot h) + \Lambda(f(1-h)) = \Lambda(f).\]
Hence we have
\[ \lambda(K_1) + \lambda(K_2) \leq \inf\{ \Lambda(f)\sr f \geq \Id{K_1 \sqcup K_2}\} = \lambda(K_1 \sqcup K_2),\]
and the additivity of $\lambda$ on $\cK_{\tau}$ follows. 

We have proved that $\lambda$ is a $\tau$-content.  Applying Theorem \ref{apmain} we find a {\em finite measure} $\mu$ on the $\sigma$ algebra $\zK_{\tau} = \sigma(\cK_{\tau})$ such that
\[  \lambda(K) \leq \mu(K),\quad K\in \cK_{\tau}.\]
We shall extend the measure $\mu$ to Borel subsets $\zB_{\tau} = \sigma(\cO_{\tau})$. Let $\lambda_*$ and $\mu^*$ be the inner content induced by $\lambda$ (Definition \ref{inncont}) and the outer measure induced by $\lambda$ (Definition \ref{outmeas}), respectively.

Let $K_n = \{ x\sr v(x) \geq (1/n)\}$, $n=1,2,,\ldots$ and let $K_{\infty} = \bigcup_{n=1}^{\infty}$.   Fix $n\in\GN$, take a $\tau$-compact set $L \subset K_n^c$ and let $h_n$ be the continuous function given by Lemma \ref{Separ} for the pair $L$ and $K_n$. 
We have by the definition of $\lambda$ and the strict continuity of $\Lambda$, 
\begin{equation}\label{eq:conc1} \lambda(L) \leq \Lambda(h_n) \leq \sup \{x\,:\, h_n(x) v(x)\} \leq (1/n).
\end{equation}
Keeping in mind that $K_n^c \in \zK_{\tau}\cap \cO_{\tau}$ we obtain (see Theorem \ref{outhalm} (ii)) that
\begin{equation} \label{eq:conc2} (1/n) \geq \lambda_*(K_n^c) = \mu^*(K_n^c) = \mu(K_n^c).
\end{equation}
In particular, $ 0 = \mu(K_{\infty}^c) = \mu^*(K_{\infty}^c)$, and the measure $\mu$ can be naturally extended to a {\em tight} probability measure on (at least) the $\sigma$-algebra of all Borel sets $\zB_{\tau}$.

It remains to identify $\Lambda(f)$ with $\int_{\cX} f(x)\,\mu(dx)$. A part of the proof is fairly standard. 
Take $U \in \cO_{\tau}$ and $f\in CB(\cX,\tau), f \geq \Id{U}$. Then
\[ \mu(U) = \mu^*(U) = \lambda_*(U) \leq \Lambda(f). \]
Repeating step by step the proof of \cite[Theorem B, p. 245]{Halm50} we obtain the relation 
\begin{equation} \label{eq:l-f} \int_{\cX} f(x) \mu(dx) \leq \Lambda(f), \quad f\in CB(\cX,\tau), f \geq 0.
\end{equation} 
Let us consider the functional $\Theta(f) = \Lambda(f) - \int_{\cX} f(x)\mu(dx)$. It is a positive linear functional on $CB(\cX,\tau)$. The $\tau$-content $\theta$ generated by the functional $\Theta$ satisfies
\begin{align*} 0 \leq \theta(K) &= \big\{ \Lambda(f) - \int_{\cX} f(x) \mu(dx)\sr f \in CB(\cX,\tau), f \geq \Id{K}\big\} \\ &\leq  \inf\big\{ \Lambda(f) \sr f \in CB(\cX,\tau), f \geq \Id{K}\big\} - \mu(K) = \lambda(K) - \mu(K) \leq 0.\end{align*}
Therefore we have $\lambda(K) = \mu(K)$, $K\in \cK_{\tau}$.

Now we are ready to prove that $\mu(\cX) =1$. To see that, let us notice that
if $1\geq f \geq \Id{K_n}$, then by the continuity in the strict topology
\[ 1 - \Lambda(f) =|\Lambda(\Id{\cX} - f)| \leq \sup_{x \in K_n} |1 - f(x)| \|v\|_{\infty} + \sup_{x \in K_n^c} (1 - f(x)) v(x) < 1/n. \]
This means that  $ \Lambda(f) > 1 - 1/n$, hence
\[ \mu(K_n) = \lambda(K_n) = \inf\{ \Lambda(f)\sr f \geq \Id{K_n}\} \geq 1 - 1/n.\] 
Thus $1 \geq \mu(\cX) \geq \sup_n \mu(K_n) = 1$ and our claim follows. The rest of the procedure of identification is immediate. Take a general $f\in CB(\cX,\tau)$ and let $C= \sup_x |f(x)|$. Then both $f + C \geq 0$ and $-f + C \geq 0$ and therefore by (\ref{eq:l-f})
\begin{align*} \Lambda(f) + C = \Lambda(f + C\Id{\cX})&\geq \int_{\cX} (f(x) + C) \mu(dx) = \int_{\cX} f(x) \mu(dx)+ C,\\ 
	-\Lambda(f) + C = \Lambda(-f + C\Id{\cX})&\geq \int_{\cX} (-f(x) + C) \mu(dx) = -\int_{\cX}  f(x) \mu(dx)+ C,
\end{align*}
or $\Lambda(f) = \int_{\cX} f(x) \mu(dx), \quad f\in CB(\cX,\tau)$.

We can also prove the uniqueness. Suppose $\mu$ is tight, $\nu$ is a Borel probability measure and 
\[ \int_{\cX} f(x) \mu(dx) =  \int_{\cX} f(x) \nu(dx),\quad f\in CB(\cX,\tau).\]

\begin{lemma}\label{compid}
	If $K$ is a $\tau$-compact subset of a submetric space $\big( \cX, \tau)$ then there exists a sequence $\{f_n\}$ of $\tau$-continuous functions such that $1 \geq f_n(x) \searrow \Id{K}(x)$. 
\end{lemma}
\begin{proof}[Proof of Lemma~{\upshape\ref{compid}}] Set
	\begin{equation}\label{funcrho} f_n(x) = 1 - \big(n d(x,K)\big)\wedge 1, n = 1,2 \ldots.\end{equation}
\end{proof} 

Choose $K \in \cK_{\tau}$ and let $\{f_n\}$ be the sequence described in Lemma \ref{compid}.  Then by the Lebesgue theorem
\[ \mu(K) = \lim_{n\to\infty} \int_{\cX} f_n(x) \mu(dx) = \lim_{n\to\infty} \int_{\cX} f_n(x) \nu(dx) = \nu(K).\]
In particular, $\nu$ is tight and both measures coincide ({\em at least}) on Borel sets  $\zB_{\cX}$.
\end{proof}

\begin{example}\label{ExSmir}
The Smirnov space is the most popular example of a Hausdorff space which is not a regular topological space (see \cite[Example 1, p. 107]{Smir51}, \cite[Example 1.5.6, p. 39]{Eng89}). Let $\cX = [0,1]$ and let $\cZ = \{ 1/n\,;\, n=1,2,\ldots \}$. Denote $U_i(x) = (x - (1/i), x + (1/i))\cap [0, 1]$, $x\in [0,1], i \in \GN$. We define  $\tau$ by a system of neighborhoods:
\[ B(x)  = \begin{cases} \{U_i(x)\}_{i\in \GN} & \text{ if $x \in (0,1]$}; \\
	 \{U_i(x) \setminus \cZ\}_{i\in \GN} & \text{ if $x = 0$}.
	 \end{cases}
	 \]
Then $\cZ$ is a {\em closed} set that cannot be separated from $0$: if $U,V \in \cO_{\tau}$, $0 \in U$, $\cZ \subset V$, then necessarily $U\cap V \neq \emptyset$. 

Notice that this topological space becomes {\em submetric}, when we choose as $d$ the usual distance on $[0,1]$. Moreover, $(\cX,\tau)$ {\em is not compact}. Indeed, if $\{x_n\sr n\in\GN\} \subset \cZ$ and $x_n\to 0+$, then no subsequence of $\{x_n\}$ can be $\tau$-convergent to $0$, as $\cZ$ is closed and $0\not\in\cZ$.

We will show that $CB([0,1],\tau) = C([0,1])$. Let $f \in CB([0,1],\tau)$. Due to the system of neighborhoods that defines $\tau$, $f$ is continuous in the usual sense  at every $x\in (0,1]$. Hence it is enough to check the sequential $d$-continuity of $f$ at $x = 0$. Suppose that $x_n \to 0+$ and that  the set $\{x_n\sr n\in\GN\}\cap \cZ$ is finite.  Take any $\varepsilon > 0$. There exists $i\in \GN$ such that $[0,1/i)\setminus \cZ \subset \{x\sr |f(x) - f(0)| < \varepsilon\}$. It follows that  $|f(x_n) - f(0)| < \varepsilon$ for $n$ large enough. So suppose that along some subsequence $\{n_k\}$ we have $x_{n_k} \in \cZ$, $k\in\GN$. For each $k\in\GN$, replace $x_{n_k}$ with $x_k' \not\in \cZ$ such that $d(x_{n_k},x_k') \to 0$ and $f(x_{n_k}) - f(x_k') \to 0$. For the new sequence $\{x_k'\}$ we have $\{x_k'\sr k\in\GN\}\cap \cZ = \emptyset$, hence by the earlier step $f(0) = \lim_{k\to\infty} f(x_k') = \lim_{k\to\infty} f(x_{n_k})$.

Let $\Lambda$ be a positive linear functional on $CB([0,1],\tau)$. We have just identified $CB(\cX,\tau)$ with $C([0,1])$, hence $\Lambda$ is determined by the classical Riesz representation.
\begin{equation}\label{eq:classic} \Lambda(f) = \int_{[0,1]} f(x)\,\mu(dx), \quad f \in C([0,1]),
\end{equation}
for some Borel measure $\mu$  on $\zB_{[0,1]}$.
Notice that this conclusion is in perfect agreement with our Theorem \ref{main}. Since there are ``less" compact subsets in $([0,1],\tau)$ than in $([0,1],\tau_d)$, there are also ``less" functions in $V_{\infty}(\tau)$ than in  $V_{\infty}(\tau_d)$. It follows that any strictly $\tau$-continuous linear functional is also strictly $\tau_d$-continuous  and therefore is given by (\ref{eq:classic}).

On the other hand, a probability measure $\mu$ on $([0,1],\zB_{[0,1]})$ (in fact, a $\tau_d$-tight measure) defines a strictly $\tau$-continuous linear functional. For example,  take numbers $a_n\searrow 0$ such that $\mu\big((\{0\} \cup [a_n, 1])^c\big) < 1/4^{n+1}$. Define $K_0 = \{0\}$ and 
$K_n = \{0\} \cup [a_n, 1] \in \cK_{\tau}$, $n = 1,2, \ldots$ and consider
\[ v(x) = \begin{cases} 1,& \text{ if $x=0$ or $(1/4) \leq x \leq 1$};\\
	2^{-n}, & \text{ if $ K_n \setminus K_{n-1}$},\quad n=1,2,\ldots.
\end{cases}\]
Clearly, $v$ ``vanishes at infinity" (i.e. $v\in V_{\infty}(\tau)$). Repeating step by step the computation behind formula (\ref{nes}), we obtain the inequality
\[ |\int_{[0,1]} f(x)\,\mu(dx)| \leq \|f v\|_{\infty}.\] 
\end{example}
 \begin{remark}\label{rem:comm}
 	The preceding example explicitly shows that certain irregularities of the original topology cannot be detected by continuous functions and are thus irrelevant in the context of the Riesz representation theorem. Furthermore, even if $CB(\cX,\tau) = CB(\cX,\tau')$, the corresponding families of compact sets $\cK_{\tau}$ and $\cK_{\tau'}$ may still differ.
 \end{remark}
 \begin{remark}\label{rem:Kiiski}
 For any submetric space $(\cX,\tau)$, let $\tau_c$ denote the canonical submetric topology—that is, the coarsest topology rendering all elements of $CB(\cX,\tau)$ continuous. In particular, $\tau_c \subset \tau$. The topology $\tau_c$ is submetric since any metric $d$ that is separately $\tau$-continuous is also separately $\tau_c$-continuous. Fixing a metric $d$ compatible with $\tau$, the space $CB(\cX,\tau_d)$ separates points in $\cX$, implying that $CB(\cX,\tau)$ separates points as well. Consequently, $(\cX,\tau_c)$ is a Tychonoff space, enabling the application of Sentilles' theory. 
 
 Suppose that $\Lambda$ satisfies the conditions of Theorem~\ref{main} with $v \in V_{\infty}(\tau)$. Since every $\tau$-compact set is $\tau_c$-compact, it follows that $V_{\infty}(\tau) \subset V_{\infty}(\tau_c)$. By \cite[Theorem 3, p. 141]{Jar81}, there exists a $\tau_c$-tight measure $\mu$ on the Borel $\sigma$-algebra $\zB_{\tau_c}$ (notice that {\em a priori} we have only  $ \zB_{\tau_c} \subset\zB_{\tau}$). A key ingredient in our argument is the Urysohn-type lemma for compact subsets given in Lemma~\ref{Separ}. Define $K_n = \{ x \in \cX \mid v(x) \geq 1/n \} \in \cK_{\tau} \subset \cK_{\tau_c}$.  For any $\tau_c$-compact subset $L \subset K_n^c$, let $h_n$ be the $\tau_c$-continuous function provided by Lemma~\ref{Separ} for the pair $K_n$ and $L$. Analogously to (\ref{eq:conc1}), we obtain
 \[\mu(L) \leq \int_{\cX} h_n(x)\,\mu(dx) = \Lambda(h_n) \leq \sup_{x \in \cX} \{ h_n(x) v(x) \} \leq 1/n.\]
 Because $\mu$ is a $\tau_c$-tight measure, it follows that $\mu(K_n^c) \leq 1/n$ for each $n \geq 1$. Therefore, $\mu$ is also $\tau$-tight and admits a unique extension to a $\tau$-tight measure on $\zB_{\tau}$.
 
 \end{remark}
 \begin{remark}
 We developed the theory for positive linear functionals. However, the set $CB(\cX,\tau)$ is a vector lattice of bounded functions, and it is therefore standard to extend Theorem~\ref{main} to general real (or complex) linear functionals that are continuous with respect to the strict $\tau$-topology (see, e.g., \cite[Theorem 7.8.3, p. 102]{Bog07II}).
 
 \end{remark}

\section{Uniform tightness and the Skorokhod parametrization}\label{SecUT}

Let us return  for a while to equations (\ref{eq:conc1}) and (\ref{eq:conc2}). They describe the concentration of the measure $\mu$ in terms of the function $v$: if $K_{\varepsilon} = \{x\,:\, v(x) \geq \varepsilon\}\in \cK_{\tau}$ then $\mu(K_{\varepsilon}^c) \leq \varepsilon$. Recall that a family $\{\mu_i\}_{i\in\GI}$ of probability measures on a topological space $(\cX, \tau)$ is {\em uniformly $\tau$-tight} if for every $\varepsilon > 0$ there exists  a compact set $K_{\varepsilon} \in \cK_{\tau}$ such that
\[ \mu_i (K_{\varepsilon}^c) \leq \varepsilon,\quad i \in \GI.\]
We have an immediate consequence of Theorem \ref{main} and the preceding remark.

\begin{theorem} \label{equicont}
	Let $(\cX,\tau)$ be a submetric space and let $CB(\cX,\tau)$ be the space of bounded continuous functions on  $(\cX,\tau,\rho)$. Consider a family $\{\Lambda_i\}_{i\in \GI}$ of positive linear functionals on $CB(\cX,\tau)$, with $\Lambda_i(\Id{\cX}) = 1$, $i\in \GI$. 
	
	Then  there is a $v\in V_{\infty}(\tau)$ such that 
	\[ |\Lambda_i(f)| \leq \|fv\|_{\infty},\quad f\in CB(\cX,\tau),\quad i \in \GI,\]
	(i.e. the family $\{\Lambda_i\}_{i\in \GI}$ is equicontinuous in the strict topology) if, and only if, the corresponding  probability measures $\{\mu_i\}_{i\in\GI}$ are uniformly $\tau$-tight.
\end{theorem}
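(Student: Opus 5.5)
Both directions will mimic the proof of Theorem~\ref{main}, but with uniform constants. The statement implicitly contains existence of the measures: the ``only if'' direction must first produce $\mu_i$. If a common $v\in V_{\infty}(\tau)$ works for all $\Lambda_i$, then each $\Lambda_i$ is strictly continuous. Theorem~\ref{main} then gives a unique $\tau$-tight probability measure $\mu_i$ with $\Lambda_i(f)=\int f\,d\mu_i$. Conversely, in the ``if'' direction the $\mu_i$ are given as tight measures representing $\Lambda_i$.

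\emph{Equicontinuity $\Rightarrow$ uniform tightness.} Put $K_n=\{x\sr v(x)\geq 1/n\}\in\cK_{\tau}$, which is the same compact set for every $i$. Take a $\tau$-compact $L\subset K_n^c$ and let $h_n$ be the function of Lemma~\ref{Separ}, with $h_n=1$ on $L$ and $h_n=0$ on $K_n$. Exactly as in (\ref{eq:conc1}),
\[ \mu_i(L)\leq \int h_n\,d\mu_i=\Lambda_i(h_n)\leq \sup_x h_n(x)v(x)\leq 1/n .\]
Since $\mu_i$ is $\tau$-tight, it is inner regular on Borel sets by compacts (Appendix~\ref{sec:tight}, Proposition~\ref{prop: unif}). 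Taking the supremum over compact $L\subset K_n^c$ gives $\mu_i(K_n^c)\leq 1/n$ for all $i\in\GI$. Given $\varepsilon>0$, choosing $n\geq 1/\varepsilon$ yields uniform $\tau$-tightness. The one point to check is the inner regularity on the open set $K_n^c$. Alternatively, one can use (\ref{eq:conc2}) directly, since the $\mu_i$ were constructed in Theorem~\ref{main} and $\mu_i=\mu_i^*=\lambda_*$ on $K_n^c$.

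\emph{Uniform tightness $\Rightarrow$ equicontinuity.} By uniform tightness, choose compacts $K_n$ with $\mu_i(K_n^c)<4^{-n-1}$ for all $i$. Replace $K_n$ by $K_1\cup\dots\cup K_n$ to make the sequence increasing; finite unions of compacts are compact. Set $K_0=\emptyset$. Define $v=2^{-n}$ on $K_n\setminus K_{n-1}$ and $v=0$ off $\bigcup_n K_n$, as in the first part of the proof of Theorem~\ref{main}.

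Then $v\in V_{\infty}(\tau)$, because $\{v\geq\varepsilon\}=K_m$ for the largest $m$ with $2^{-m}\geq\varepsilon$. Also $\mu_i(\bigcup_n K_n)=1$ for every $i$. The chain of inequalities (\ref{nes}) then gives, verbatim and uniformly in $i$,
\[ |\Lambda_i(f)|\leq\|fv\|_{\infty}\sum_{n\geq1}2^n\mu_i(K_{n-1}^c), \]
with the bound $\sum_n 2^n 4^{-n}=1$ once we use $\mu_i(K_{n-1}^c)\leq 4^{-n}$ for $n\geq2$.

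The $n=1$ term needs care, because $\mu_i(K_0^c)=1$ and the factor $2$ then spoils the constant $1$. I expect this bookkeeping to be the main (minor) obstacle. I would fix it by starting the scale lower, taking $v=2^{-n+1}\cdot c$ with a suitable normalization, or simply by choosing $v=1$ on $K_1$ and $2^{-n+1}$ on $K_n\setminus K_{n-1}$ with sharper tail bounds $\mu_i(K_n^c)<8^{-n}$. Then the sum is $1\cdot\mu_i(K_1)+\sum_{n\geq2}2^{n-1}8^{-(n-1)}\leq 1$ after adjusting constants. Nothing beyond the constant is at stake: the estimate holds uniformly, so the family is equicontinuous.
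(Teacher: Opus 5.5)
Your proposal is correct and follows the paper's own route, which obtains Theorem~\ref{equicont} directly from the proof of Theorem~\ref{main}. For necessity, the concentration bound (\ref{eq:conc1})--(\ref{eq:conc2}) with the common $v$ gives $\mu_i(\{v\geq 1/n\}^c)\leq 1/n$ for all $i$; for sufficiency, the construction behind (\ref{nes}) with common compacts $K_n$ produces a common $v$.

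Your concern about the $n=1$ term is justified: as written, (\ref{nes}) would need $\mu_i(K_0^c)<1/4$, which is false, and the chain actually yields $|\Lambda_i(f)|\leq \frac{5}{2}\|fv\|_{\infty}$. The clean repair is to replace $v$ by $\frac{5}{2}v$, which stays in $V_{\infty}(\tau)$ because $\{\frac{5}{2}v\geq\varepsilon\}=\{v\geq 2\varepsilon/5\}$. Your variant with $v\leq 1$ still needs such a rescaling in general, since then $\int(1/v)\,d\mu_i\geq 1$.
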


\begin{example}\label{ExHilb} 
	Let us consider a separable Hilbert space $\GH$ with the weak topology $\tau_w$, i.e. the coarsest topology with respect to which all scalar products $\langle \cdot,y\rangle$, $y\in \GH$, are continuous. By the very definition, $(\GH,\tau_w)$ is a linear topological space, hence Tychonoff. We know also (see (\ref{eq:4})) that it is a submetric space, so both the standard methods based on Tychonoff/Stone/\v{C}ech formalism and our simplification work. 
	
	Take any positive linear functional $\Lambda_i$ on $CB(\GH,\tau_w)$, which satisfies $\Lambda_i(\Id{\GH}) = 1$ and 
	\[	|\Lambda_i(f)| \leq \|fv\|_{\infty},\quad f\in CB(\GH,\tau_w),\]
	for some  $v \in V_{\infty}(\tau_w)$. Then there exists a $\tau_w$-tight probability measure $\mu_i$ on $\zB_{\tau_w}$ such that on $CB(\GH,\tau_w)$ we have $\Lambda_i(f) = \int_{\GH} f(x)\,\mu(dx)$. But it is well-known (see, e.g. \cite[Proposition 2.1]{ItNi68}) that on $\GH$ we have $\zB_{\tau_w} =  \zB_{\|\cdot\|}$ and therefore $\mu_i$ is also tight in the sense of $(\GH,\|\cdot\|)$. Hence {\em individually} $\Lambda_i$ satisfies a much stronger inequality
	\[  |\Lambda_i(f)| \leq \|f u_i\|_{\infty},\quad f\in CB(\GH,\|\cdot\|) \supset CB(\GH,\tau_w),\]
	with $u_i \in V_{\infty}(\|\cdot\|) \subset V_{\infty}(\tau_w)$.
	However, if we assume 
	\[	|\Lambda_i(f)| \leq \|fv\|_{\infty},\quad f\in CB(\GH,\tau_w),\quad i \in \GI,\]
	then in general we cannot replace $v \in V_{\infty}(\tau_w)$ with some $u \in V_{\infty}(\|\cdot\|)$.
	
	Let us notice that checking uniform $\tau_w$-tightness of some family $\{\mu_i\}$ is particularly simple:
	\[ \lim_{n\to\infty} \sup_i\ \mu_i(\{x\sr \|x\| > n\} ) = 0.\]
	We are going to show that this weak assumption is informative and there is a tool making it quite operational.
\end{example}

\begin{remark}\label{rem:unift} 
	
The notion of uniform tightness was introduced by Prohorov \cite{Pro53, Pro56}, who called it "condition ($\kappa$)" and proved that, in Polish spaces, it implies the relative compactness of a set of probability measures in the weak topology (the direct Prohorov theorem; in fact, this holds in arbitrary metric spaces). The converse Prohorov theorem states that, in Polish spaces, weak relative compactness of a set of probability measures implies its uniform tightness. 

The applicability of uniform tightness stimulated the extensive development of functional limit theorems, as presented in classic books by Billingsley \cite{Bill68} and Parthasarathy \cite{Part67}, as well as in more recent monographs \cite{Whitt02, JaSh03}.

The direct part of Prohorov's theory also holds for measures on Tychonoff spaces with metrizable compacts, as summarized in the review article by Smolyanov and Fomin \cite{SmFo76}. Note that the submetric spaces discussed in Example \ref{ExSmir} and in \cite{Jak97NS} fall outside this framework. To align with the general scheme of limit theorems, we develop a suitable framework for submetric spaces.
\end{remark}

We will need  the Fernique-Skorokhod parametrization of probability measures on a Polish space $(\cY,\rho)$. As it plays an important role in our considerations, we restate here this result.  Let us denote by $\cP(\cY)$ the space of probability measures on $(\cY,\zB_{\cY})$, by 
$\cL^0([0,1],\cY)$ the space of Borel measurable maps from $[0,1]$ to $\cY$, by $\ell$ the Lebesgue measure on $[0,1]$ and by $\Rightarrow$ the weak convergence of probability measures.
\begin{theorem}[Fernique,\cite{Fern88}]\label{th:Fern}
	Let $(\cY,\rho)$ be a Polish space. There exists a map 
	$ \cP(\cY) \ni \nu \mapsto Y_{\nu} \in \cL^0([0,1],\cY)$
	such that 
	\begin{description}
		\item{\bf (i)} $ Y_{\nu} \sim \nu$ for each $\nu \in \cP(\cX)$.
		\item{\bf (ii)} $\nu_n \Rightarrow \nu_{\infty}$ if, and only if, $\rho\big(Y_{\nu_n}(\omega), Y_{\nu_{\infty}}(\omega)\big) \to 0$, for $\ell$-almost all $\omega \in [0,1]$.
	\end{description}
\end{theorem}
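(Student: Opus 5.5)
The plan is to prove both implications with a Skorokhod-type ``nested cells'' construction. Its partitions must not depend on $\nu$, so I will randomise the radii of the balls using part of the parameter $\omega$. Throughout, $\cP(\cY)$ in (i) should read $\cP(\cY)$, since the statement is about a Polish space.

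\textbf{Setup.} Identify $([0,1],\ell)$, up to a measure-preserving Borel isomorphism, with a product $([0,1]\times[0,1]^{\GN},\ell\otimes\ell^{\otimes\GN})$. Write $\omega=(u,r_1,r_2,\ldots)$ and, for $k\geq 1$, put $s_k=2^{-k}(1+r_k)\in(2^{-k},2^{-k+1})$.
\begin{itemize}
\item Fix a dense sequence $\{y_j\}$ in $(\cY,\rho)$.
\item At level $k$, define the Borel cells $C^k_j=B(y_j,s_k)\setminus\bigcup_{i<j}B(y_i,s_k)$.
\item The level-$k$ cells of the refined partition are the intersections $C^{1}_{j_1}\cap\cdots\cap C^k_{j_k}$, indexed by words $(j_1,\ldots,j_k)$ and ordered lexicographically. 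Each such cell has diameter at most $2^{-k+2}$.
\end{itemize}
For a fixed $\nu$, each sphere $\{y:\rho(y,y_j)=s\}$ carries positive $\nu$-mass for at most countably many $s$. Hence for $\ell^{\otimes\GN}$-almost every $(r_k)$, every cell is a $\nu$-continuity set.

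\textbf{Construction of $Y_\nu$.} At level $k$, partition $[0,1)$ into consecutive intervals $I^k_{w}(\nu)$ of lengths $\nu(C_w)$, placed in lexicographic order. This is the classical Skorokhod construction, and these intervals are nested across levels. Suppose $u$ lies in the interior of $I^k_{w_k}(\nu)$ for every $k$. Then the words $w_k$ are compatible and every $\nu(C_{w_k})>0$, so the closures $\overline{C_{w_k}}$ are nonempty, nested, closed and of vanishing diameter. By completeness their intersection is a single point, which I define to be $Y_\nu(\omega)$; elsewhere, on an $\ell$-null set, let $Y_\nu(\omega)=y_1$.

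To see that $Y_\nu\sim\nu$, first note that $\ell(Y_\nu\in\overline{C_w})\geq\nu(C_w)$ for every cell. When $(r_k)$ makes all cells $\nu$-continuity sets, this lower bound passes to equality on the cells up to null boundaries. The cells form a $\pi$-system generating $\zB_{\cY}$. Integrating over $(r_k)$ then gives (i). Measurability of $\omega\mapsto Y_\nu(\omega)$ is routine, since it is a pointwise limit of simple maps.

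\textbf{The equivalence (ii).} The direction ``$\Leftarrow$'' is immediate. If $Y_{\nu_n}\to Y_{\nu_\infty}$ holds $\ell$-a.s., then for every $f\in CB(\cY)$ we get $\int f\,d\nu_n=\int f(Y_{\nu_n})\,d\ell\to\int f\,d\nu_\infty$ by dominated convergence.

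For ``$\Rightarrow$'', fix $(r_k)$ in the full-measure set on which all cells are $\nu_\infty$-continuity sets.
\begin{enumerate}
\item By the portmanteau theorem, $\nu_n(C_w)\to\nu_\infty(C_w)$ for every word $w$.
\item The endpoints of $I^k_w(\nu_n)$ are sums of such masses, and these sums may be infinite. I would control them by the fact that cumulative sums of lexicographically preceding cells are themselves $\nu_\infty$-continuity sets (finite unions of cells, plus tails), together with $\sum_w\nu_n(C_w)=1$. This gives convergence of the endpoints to those of $I^k_w(\nu_\infty)$.
\item Hence, for $u$ interior to $I^k_{w_k}(\nu_\infty)$, eventually $u\in I^k_{w_k}(\nu_n)$. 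Then both $Y_{\nu_n}(\omega)$ and $Y_{\nu_\infty}(\omega)$ lie in $\overline{C_{w_k}}$, so $\rho(Y_{\nu_n}(\omega),Y_{\nu_\infty}(\omega))\leq 2^{-k+2}$.
\item Letting $k\to\infty$ yields a.s. convergence.
\end{enumerate}

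\textbf{Main obstacle.} The delicate step is step 2: endpoint convergence for countably many cells. This needs uniform control of the tail masses $\nu_n\bigl(\bigcup_{i>j}C_i\bigr)$, which follows from tightness of a weakly convergent sequence on a Polish space. Combined with the null-boundary property, which holds simultaneously for all $\nu_n$ only at the limit measure, this should suffice.
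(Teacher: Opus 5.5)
The paper does not prove this statement. It quotes it from Fernique \cite{Fern88}, and remarks that the earlier Blackwell--Dubins parametrization on $[0,1]\times[1,2]$ \cite{BlDu83} was only sketched and reportedly has a gap. So your randomized-radius construction has to stand on its own, and on its merits the architecture is sound. With refining partitions ordered lexicographically, the randomization only has to make all cells continuity sets of the \emph{limit} $\nu_\infty$, for almost every value of the auxiliary parameter. Fubini then yields $\ell$-a.s.\ convergence on the product space. One uniform variable $r$ with $s_k=2^{-k}(1+r)$ would already suffice. Compared with citing Fernique, your route gives a self-contained, elementary proof, at the price of the auxiliary randomization and the identification of $([0,1],\ell)$ with a product.

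Three steps need repair.

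\textbf{The distribution identity (i).} The sentence ``the lower bound passes to equality on the cells up to null boundaries'' does not follow. Knowing $\nu(\partial C_w)=0$ says nothing about how the law $\nu_r$ of $Y_\nu(\cdot,r)$ charges $\partial C_w$. The inequalities $\nu_r(\overline{C_w})\geq\nu(C_w)$ do not by themselves force equality, because the closures overlap. Argue instead with a closed set $F$. Outside a countable set of $u$, the event $\{Y_\nu(\cdot,r)\in F\}$ is contained in the union of those $I^k_w(\nu)$ with $\overline{C_w}\cap F\neq\emptyset$. Hence $\nu_r(F)\leq\nu(\{y:\rho(y,F)\leq 2^{-k+2}\})\to\nu(F)$. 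Two Borel probabilities that compare this way on all closed sets coincide, so $\nu_r=\nu$ for \emph{every} $r$, with no continuity sets and no averaging.

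\textbf{Endpoint convergence (step 2).} No tail control or tightness is needed. The left endpoint of $I^k_w(\nu)$ is an infinite series, but it equals $\nu(P_w)$, where for $w=(j_1,\ldots,j_k)$ and $C_{\emptyset}=\cY$ one has $P_w=\bigcup_{m\leq k}\big(C_{(j_1,\ldots,j_{m-1})}\cap\bigcup_{i<j_m}B(y_i,s_m)\big)$. This is a finite Boolean combination of finitely many balls, so its boundary lies in finitely many spheres. Therefore $P_w$ is a $\nu_\infty$-continuity set for the same full-measure set of $r$, and the portmanteau theorem applies directly.

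\textbf{Exceptional points and measurability (step 3).} For each $n$ you must discard the countably many $u$ that are endpoints of some $I^m_{w}(\nu_n)$. At those points $Y_{\nu_n}=y_1$, which need not lie in $\overline{C_{w_k}}$. You should also record that $(u,r)\mapsto Y_\nu(u,r)$ is jointly Borel: the masses $\nu(C_w)$ and $\nu(P_w)$ are Borel in $r$, by inclusion--exclusion and monotonicity in the radii. This is needed because Fubini is what turns ``a.e.\ $u$ for a.e.\ $r$'' into $\ell$-a.s.\ convergence.

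With these repairs your argument is a complete proof.
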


\begin{remark}
Before Fernique, Blackwell and Dubins \cite{BlDu83} constructed a Skorokhod parametrization defined on $[0,1]\times[1,2]$. However, their proof was rather sketchy (in contrast to Fernique's rigorous proof) and, as indicated in \cite{Jak25}, contains a serious gap. Note that Bogachev and Kolesnikov \cite{BoKo01} obtained a very general parametrization theorem in metric spaces using advanced tools from topology and functional analysis. While elegant, their proofs are not easily accessible (see \cite{Bog18}, Sections 2.6 and 5.4).
\end{remark}

The main result of this section is a Skorokhod parametrization of a uniformly tight family of probability measures on a submetric space. We need some terminology. 

\begin{definition}
A sequence $\{X_n\sr n=1,2,\ldots, \infty\}$ of random elements with values in a submetric space $(\cX,\tau)$ is said to be convergent almost surely {\em in compacts} if 
\[\bP\big( \omega\sr X_n(\omega) \intau X_{\infty}(\omega)\big) = 1\] and for each $\varepsilon > 0$ there is a $\tau$-compact set $K_{\varepsilon}$ such that
\[  	\bP\big(\{\omega \in [0,1]\sr X_{n}(\omega) \in K_{\varepsilon}, n = 1,2,\ldots,\}\big) > 1 - \varepsilon.\]   
\end{definition}
\begin{remark}
In other words, $X_n \to X_{\infty}$ almost surely in compacts if $X_n \conv_{a.s.} X_{\infty}$ and $\{X_n\}$ is {\em strongly} tight. This is not a very exotic mode of convergence. For example, an almost surely convergent sequence of elements  in a Polish space is strongly tight (see \cite[Theorem 3.1]{KruZie96}); hence, it is  almost surely convergent in compacts.	
\end{remark}
\begin{definition}  We say that a family $\{\mu_i\}_{i\in \GI}$ of tight measures on a submetric space $(\cX,\tau)$ admits a Skorokhod parametrization if there exists a family $\{Y_i\}_{i\in\GI}$ of random elements, defined on $([0,1],\zB_{[0,1]}, \ell)$ and with values in $(\cX,\tau)$, such that 
\begin{description}
		\item{\bf (i)} $Y_i \sim \mu_i$,\quad $i\in\GI$;
		\item{\bf (ii)} In each subsequence $\{Y_n\}_{n\in\GN}\subset \{Y_i\}_{i\in\GI}$ one can find a further subsequence $\{Y_{n_k}\}_{k\in\GN}$ and a random element $Y_{\infty}$ on $([0,1],\zB_{[0,1]}, \ell)$  such that $Y_{n_k} \conv Y_{\infty}$ almost surely in compacts.
	\end{description}
\end{definition}

\begin{theorem}\label{th:SkPar}
	If $\{\mu_i\}_{i\in\GI}$ is a uniformly $\tau$-tight family of probability measures on a submetric space $(\cX,\tau)$,
	then it admits a Skorokhod parametrization.
\end{theorem}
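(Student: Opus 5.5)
Fix a $\tau$-continuous metric $d$ on $\cX$ and pass to a Polish setting where Theorem \ref{th:Fern} applies. By uniform tightness, choose $\tau$-compact sets $K_1\subset K_2\subset\cdots$ with $\sup_i\mu_i(K_m^c)\le 2^{-m}$. On each $K_m$ the topologies $\tau$ and $\tau_d$ coincide: the identity $(K_m,\tau)\to(K_m,\tau_d)$ is a continuous bijection from a compact space onto a Hausdorff one, hence a homeomorphism. Thus $K_m$ is a compact metric space, and $K_\infty=\bigcup_m K_m$ with the metric $d$ is $\sigma$-compact, hence separable. Let $\cY$ be the $d$-completion of $(K_\infty,d)$; it is Polish. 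Each $\mu_i$ is concentrated on $K_\infty$. By the results of Appendix \ref{Ap:RandEl} / Appendix \ref{sec:tight}, Borel sets of $K_\infty$ for $\tau$ and for $\tau_d$ coincide, since both are generated by their traces on the compacts $K_m$. So $\mu_i$ may be regarded as a Borel probability measure $\nu_i$ on $\cY$, with $\nu_i(K_m)\ge 1-2^{-m}$ uniformly.

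Next I apply Theorem \ref{th:Fern} to $\cY$, obtaining $Y_{\nu}$. I set $Y_i=Y_{\nu_i}$, modified on a null set so that $Y_i$ takes values in $K_\infty$ (possible since $\nu_i(K_\infty)=1$). Then $Y_i$ is $\tau$-measurable and $Y_i\sim\mu_i$, which gives (i). For (ii), take a subsequence $\{\nu_n\}$. The family is uniformly tight on the Polish space $\cY$ with the compacts $K_m$, so the classical Prohorov theorem yields a further subsequence $\nu_{n_k}\Rightarrow\nu_\infty$. Its limit satisfies $\nu_\infty(K_m)\ge 1-2^{-m}$, because the $K_m$ are closed in $\cY$ (portmanteau), so $\nu_\infty$ is carried by $K_\infty$. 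By Theorem \ref{th:Fern}(ii), $d(Y_{n_k}(\omega),Y_{\infty}(\omega))\to0$ for $\ell$-a.a.\ $\omega$, where $Y_\infty=Y_{\nu_\infty}$.

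The main obstacle is upgrading $d$-convergence to $\tau$-convergence in compacts. $d$-convergence alone gives nothing about $\tau$ unless the whole sequence lies in one compact $K_m$: there it is equivalent to $\tau$-convergence, since $K_m$ is $d$-closed and $\tau|_{K_m}=\tau_d|_{K_m}$. So I need strong tightness of the coupled sequence, $\ell(\{\omega: Y_{n_k}(\omega)\in K_M\ \forall k\})>1-\varepsilon$. A fixed parametrization will not give this automatically. My first attempt is to exploit the structure of Fernique's construction, which is built from nested partitions and quantiles and is monotone in a suitable sense. I would try to show, or arrange by reordering the construction, that the sets $\{\omega: Y_\nu(\omega)\in K_m\}$ can be taken to be intervals $[0,\nu(K_m))$ containing a common interval $[0,1-2^{-m})$ for all $\nu$ in the family.

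Failing that, I would use the fact quoted from \cite{KruZie96}: an a.s.\ convergent sequence in a Polish space is strongly tight. Given $\varepsilon$, this produces a $d$-compact set $C_\varepsilon\subset\cY$ containing all $Y_{n_k}(\omega)$ with probability $>1-\varepsilon$. I then intersect with the event $\bigcap_k\{Y_{n_k}\in K_m\}$, which must be controlled separately, by a Borel–Cantelli argument along a further subsequence with $\sum_k$ small failure probabilities, after modifying $Y_{n_k}$ on $\{Y_{n_k}\notin K_{m(k)}\}$. That modification is the delicate point. Once every $Y_{n_k}(\omega)$ and $Y_\infty(\omega)$ lie in a single $K_M$ off a set of measure $<\varepsilon$, the identification $\tau|_{K_M}=\tau_d|_{K_M}$ gives $Y_{n_k}\intau Y_\infty$ a.s. Together with the uniform containment in $K_M$, this is almost sure convergence in compacts.
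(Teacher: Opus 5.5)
Your setup is sound up to the point you flag yourself. The Polish space $\cY$ (the $d$-completion of $K_\infty$), the transfer $\mu_i\mapsto\nu_i$, Prohorov on $\cY$ and Theorem \ref{th:Fern} do give $d(Y_{n_k},Y_\infty)\to0$ a.s. The essential step is that for a.e.\ $\omega$ the whole sequence $Y_{n_k}(\omega)$ stays in a single compact $K_{M(\omega)}$. This is never proved, and neither of your ideas supplies it, so there is a genuine gap.

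Your first idea is only a hope. Theorem \ref{th:Fern} is a black box, and a rearranged construction would require reproving its property (ii).

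Your fallback fails on several counts. The set $C_\varepsilon$ from \cite{KruZie96} is $d$-compact in $\cY$ and unrelated to $\tau$-compactness. Borel--Cantelli with $\sum_k 2^{-m(k)}<\infty$ only gives $Y_{n_k}\in K_{m(k)}$ eventually, with $m(k)\to\infty$, so no single compact. For fixed $M$ the numbers $\ell(Y_{n_k}\notin K_M)$ need not decay in $k$, so they are not summable. Modifying $Y_{n_k}$ on sets of positive measure either destroys $Y_{n_k}\sim\mu_{n_k}$ or makes the family depend on the chosen subsequence. A Skorokhod parametrization, however, is one family $\{Y_i\}$ fixed in advance.

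Nor can a.s.\ $d$-convergence alone be upgraded. In $(\GH,\tau_w)$ with $d$ as in (\ref{eq:4}), take i.i.d.\ $R_n$ with $\ell(R_n>r)=1/r$ for $r\ge1$. Then $d(R_ne_n,0)\le 2^{-n}$ and the laws are uniformly $\tau_w$-tight. Yet $\sup_k R_{n_k}=\infty$ a.s.\ along every subsequence, so no subsequence converges in $\tau_w$ on a set of positive measure.

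The missing idea, which the paper uses, is to put the index of the compact into the state space, so that Fernique's theorem controls it for free.
\begin{itemize}
\item Let $\Phi(y)=\min\{m: y\in K_m\}$ on $\cY$. Apply Theorem \ref{th:Fern} to the laws of $(X_i,\Phi(X_i))$, $X_i\sim\mu_i$, on the Polish space $\cY\times\GN$. The paper works in $\GR^{\infty}\times\GN$ via Lemma \ref{lem:sigmacomp}, but your $\cY$ serves equally well. These laws are uniformly tight with compacts $K_m\times\{1,\dots,m\}$.
\item Each law is carried by the Borel graph of $\Phi$, so the parametrizing elements satisfy $\widetilde Y_i=(Y_i,\Phi(Y_i))$ a.s.
\item Along a weakly convergent subsequence, a.s.\ convergence in $\cY\times\GN$ forces the discrete coordinates $\Phi(Y_{n_k}(\omega))$ to converge to a finite limit. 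Hence they are bounded by some $M(\omega)<\infty$.
\item Then $Y_{n_k}(\omega)\in K_{M(\omega)}$ for all $k$, and the limit lies in the closed set $K_{M(\omega)}$. $d$-convergence inside $K_{M(\omega)}$ is $\tau$-convergence, and $\ell(M\le r)\to1$ gives the strong tightness.
\end{itemize}
This achieves what your first idea was reaching for, without touching Fernique's construction.
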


\begin{lemma}\label{lem:sigmacomp}
	Let $K_1, K_2, \ldots $, be compact subsets of a submetric space $(\cX,\tau)$. There exists a continuous mapping $\widetilde{f} : \cX \to \GR^{\infty}$ which is {\em a measurable isomorphism} between   $\cup_{m=1}^{\infty} K_m \subset \cX$ and $\cup_{m=1}^{\infty} \widetilde{f}(K_m) \subset \GR^{\infty}$ and {\em a homeomorphism}  between every $\tau$-compact $K\subset \cup_{m=1}^{\infty} K_m$ and  $\widetilde{f}(K) \subset \GR^{\infty}$.
\end{lemma}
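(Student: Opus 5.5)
The plan is to build $\widetilde f$ from countably many continuous functions that separate the points of $\bigcup_m K_m$. Fix a $\tau$-continuous metric $d$. Each $K_m$ is $\tau$-compact, hence $\tau_d$-compact, and therefore $d$-separable. So $D=\bigcup_m K_m$ contains a countable $d$-dense subset $\{y_j\}_{j\in\GN}$. Set $f_j(x)=d(x,y_j)\wedge 1$. Each $f_j$ is $\tau$-continuous, because $d$ is $\tau$-continuous. Define
\[ \widetilde f(x)=\big(f_1(x),f_2(x),\ldots\big)\in \GR^{\infty},\]
where $\GR^\infty$ carries the product topology. Then $\widetilde f$ is continuous, since all its coordinates are.

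\textbf{Step 1: separation of points.} The family $\{f_j\}$ separates points of $D$. Take $x\neq y$ in $D$ and put $\delta=d(x,y)\wedge 1>0$. Choose $y_j$ with $d(x,y_j)<\delta/3$. Then $f_j(x)<\delta/3$, while $f_j(y)\geq \delta-\delta/3$. Hence $\widetilde f$ is injective on $D$. In fact the metric $\tilde d(x,y)=\sum_j 2^{-j}|f_j(x)-f_j(y)|$ metrizes $\tau_d$ restricted to $D$. This is the usual Urysohn-type embedding of a separable metric space into $[0,1]^\infty$.

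\textbf{Step 2: homeomorphism on compacts.} Let $K\subset D$ be $\tau$-compact. The map $\widetilde f|_K$ is a continuous injection from a compact space into the Hausdorff space $\GR^\infty$, so it is a homeomorphism onto $\widetilde f(K)$, which is compact. This also shows that $\tau$, $\tau_d$ and the topology induced by $\widetilde f$ all coincide on $K$.

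\textbf{Step 3: measurable isomorphism on $D$.} Write $\widetilde f(D)=\bigcup_m \widetilde f(K_m)$. This set is $\sigma$-compact in $\GR^\infty$, hence Borel. The map $\widetilde f|_D$ is a Borel-measurable bijection onto it, since it is continuous. It remains to show that the inverse $g=(\widetilde f|_D)^{-1}$ is measurable, i.e.\ that $\widetilde f(B)$ is Borel in $\GR^\infty$ for every $B\in\zB_\tau$ with $B\subset D$.

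The point is that on each $K_m$ the restriction $g|_{\widetilde f(K_m)}$ is continuous by Step 2. Let $B\in\zB_\tau$. Then $B\cap K_m$ is Borel in $(K_m,\tau|_{K_m})$. Because $\widetilde f|_{K_m}$ is a homeomorphism, $\widetilde f(B\cap K_m)$ is Borel in the compact set $\widetilde f(K_m)$, and hence Borel in $\GR^\infty$. Taking the union,
\[ \widetilde f(B\cap D)=\bigcup_m \widetilde f(B\cap K_m)\]
is Borel. Conversely, for Borel $A\subset\GR^\infty$ the set $\widetilde f^{-1}(A)\cap D$ is Borel by continuity. So $\widetilde f$ is a measurable isomorphism between $(D,\zB_\tau\cap D)$ and $(\widetilde f(D),\zB_{\GR^\infty}\cap\widetilde f(D))$.

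\textbf{Main obstacle.} The expected difficulty is a subtlety in Step 3. The trace of $\zB_\tau$ on $K_m$ must equal the Borel $\sigma$-algebra of the subspace topology $\tau|_{K_m}$. This holds because the open sets of $\tau|_{K_m}$ are exactly the sets $U\cap K_m$ with $U\in\cO_\tau$, and $K_m$ is $\tau$-closed (it is compact in a Hausdorff space, and submetric spaces are Hausdorff), so $K_m\in\zB_\tau$. The rest of Step 3 relies only on compactness and the homeomorphism property from Step 2. Neither regularity of $\tau$ nor any global comparison of $\tau$ with $\tau_d$ is needed.
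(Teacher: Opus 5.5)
Your proof is correct and is essentially the paper's argument: you use the same map $x\mapsto (d(x,z_j))_j$ built from a countable $d$-dense subset of $\bigcup_m K_m$, the same injectivity plus compact-to-Hausdorff argument for the homeomorphism on compacts, and the same decomposition over the $K_m$ for measurability of the inverse. The only difference is cosmetic: the paper checks that images of $\tau$-closed sets $F$ are Borel (each $\widetilde f(F\cap K_m)$ is compact), whereas you push arbitrary Borel sets through the homeomorphisms $\widetilde f|_{K_m}$, which makes the passage from closed sets to all Borel sets explicit instead of leaving it implicit.
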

\begin{proof}[Proof of the lemma]
	Let $d$ be a $\tau$-compatible metric on $\cX$. Fix $m\in\GN$. Since $K_m$ is a metric compact, it is a separable metric space and we can find a countable dense subset $\{ z_{m,j}\,;\, j\in\GN\} \subset K_m$.
	Notice that if $x,y \in K_m$, then  $\rho(x,z_{m,j}) = \rho(y,z_{m,j}),\ j\in \GN$, implies $x = y$.
	
	Let $\{z_1, z_2, \ldots \}$ be any enumeration of  the countable set $\cup_{m=1}^{\infty} \{ z_{m,j}\,;\, j\in\GN\}$. Define
	\[ \widetilde{f}(x) = \big(d(x,z_1), d(x,z_2),\ldots \big) \in \GR^{\infty}.\]
	Clearly, $\widetilde{f}$ is continuous. It is also a one-to-one map when restricted to $\cup_{m=1}^{\infty} K_m$, hence is a homeomorphism when restricted to any $\tau$-compact $K \subset  \cup_{m=1}^{\infty} K_m$ (see, e.g., \cite[Theorem 3.1.13, p. 125]{Eng89}). It remains to establish the measurable isomorphism property. Let $F \subset \cX$ be $\tau$-closed. Then 
	\[\widetilde{f}\big(F \cap \cup_{m=1}^{\infty} K_m\big) = \cup_{m=1}^{\infty} \widetilde{f}\big(F\cap K_m\big) \in \cB_{\GR^{\infty}},\]
	for $\widetilde{f}\big(F\cap K_m\big)$ are compacts
	in $\GR^{\infty}$.
\end{proof}

\begin{proof}[Proof of Theorem \ref{th:SkPar}]
For the sake of clarity of notation, consider  a family of random variables $X_i \sim \mu_i$, $i\in \GI$.  
Let $\{K_m\}_{m\in\mathbb{N}}$ be $\tau$-compact sets such that $K_m \subset K_{m+1}$ for all $m\in\mathbb{N}$, and
\[ \mathbb{P}\big(X_i \in K_m\big) > 1 - 1/m, \qquad i\in I. \]

Let $\widetilde{f}$ be the mapping of $\mathcal{X}$ into $\mathbb{R}^{\infty}$ associated with the sequence $K_1, K_2, \ldots$ by Lemma \ref{lem:sigmacomp}. Define $\Phi : \mathbb{R}^{\infty} \to \mathbb{N} \cup \{+\infty\}$ by
\[ \Phi(\bar{y}) = \inf \{ m \ge 1 \sr \bar{y} \in \widetilde{f}(K_m) \}, \]
where, as usual, $\inf \emptyset = +\infty$. Note that $\Phi$ is finite on $\bigcup_{m=1}^{\infty} \widetilde{f}(K_m)$.

Let us introduce the auxiliary random elements
\[ \widetilde{Z}_i = \big( \widetilde{f}(X_i), \Phi(\widetilde{f}(X_i))\big) \]
almost surely taking values in $\mathbb{R}^{\infty} \times \mathbb{N}$. Set
\[ \widetilde{K}_m = \widetilde{f}(K_m) \times \{1, 2, \dots, m\}. \]
Then 
\begin{align*}
	\mathbb{P}\big(\widetilde{Z}_i \in \widetilde{K}_m\big) &=
	\mathbb{P}\big(\widetilde{f}(X_i) \in \widetilde{f}(K_m),\, \Phi(\widetilde{f}(X_i)) \leq m\big) \\
	&= \mathbb{P}\big( \widetilde{f}(X_i) \in \widetilde{f}(K_m)\big) = \mathbb{P}\big( X_i \in K_m\big) > 1 - 1/m,
\end{align*}
and hence the family $\{\widetilde{Z}_i\}$ is uniformly tight on the Polish space $\mathbb{R}^{\infty} \times \mathbb{N}$.

Let $\widetilde{Y}_i \sim \widetilde{Z}_i$, $i\in I$, be the elements of the Fernique--Skorokhod parametrization of probability measures from $\mathcal{P}\big( \mathbb{R}^{\infty} \times \mathbb{N}\big)$ defined on $([0,1],\mathcal{B}_{[0,1]},\ell)$. Let $\Pi_{\mathbb{R}^{\infty}}$ and $\Pi_{\GN}$ be the projections onto the factors of the product space $\mathbb{R}^{\infty}\times \mathbb{N}$. 
Let us observe that 
\begin{equation}\label{eq:eq} 
	\bP\big( \big\{\omega\sr \widetilde{Y}_i(\omega) = \big(\Pi_{{\GR}^{\infty}}(\widetilde{Y}_i(\omega)), \Phi\big(\Pi_{{\GR}^{\infty}}(\widetilde{Y}_i(\omega))\big)\big)\big\}\big) = 1,\qquad  i\in\GI.
	\end{equation}
Indeed, if $(U,\Psi(U)) \sim (V,W)$, then $(\Psi(U),\Psi(U)) \sim (\Psi(V),W)$, hence  $\bP\big( \Psi(V) = W\big) = 1$, provided the mapping $\Psi$ takes values in a separable metric space.
 
We claim that
\[ Y_i = \widetilde{f}^{-1} \big(\Pi_{\mathbb{R}^{\infty}} (\widetilde{Y}_i)\big), \qquad i\in I, \]
is the desired Skorokhod parametrization of $\{\mu_i\}_{i\in I}$.

First, $Y_i$ is  well-defined ($\ell$-almost surely) on $[0,1]$, as 
\[	\bP\big(\widetilde{Y}_i \in \bigcup_{i=1}^{\infty} \widetilde{K}_m\big)	= \bP\big(\widetilde{Z}_i \in \bigcup_{i=1}^{\infty} \widetilde{K}_m\big) = 1,\] and 
\[ \Pi_{\GR^{\infty}}\Big( \bigcup_{i=1}^{\infty} \widetilde{K}_m\Big) = 
	 \bigcup_{i=1}^{\infty}  \Pi_{\GR^{\infty}}\Big(\widetilde{K}_m\Big) =  \bigcup_{i=1}^{\infty} \widetilde{f}(K_m).\]
Moreover, for any $\tau$-compact $K \subset \bigcup_{k=1}^{\infty}  K_m$ we have 
\begin{align*}
	\bP \big( Y_i \in K\big) &= \bP \big( \widetilde{f}(Y_i) \in \widetilde{f}(K)\big) = \bP \big( \Pi_{\GR^{\infty}}(\widetilde{Y}_i) \in \widetilde{f}(K)\big)= \bP \big(\widetilde{Y}_i \in \widetilde{f}(K)\times \GN)\\
	&= \bP \big(\widetilde{Z}_i \in \widetilde{f}(K)\times \GN) =\bP \big( \widetilde{f}(X_i) \in \widetilde{f}(K)\big) = 	\bP \big( X_i \in K\big).
	\end{align*}
	It follows that the family $\{Y_i\sr  i\in\GI\}$ is a coupling of $\{X_i\sr i\in\GI\}$. In order to complete the proof, take any sequence
	$\{Y_n\} \subset  \{Y_i\sr  i\in\GI\}$. The sequence $\{\widetilde{Y}_n\}$ of random elements with values in $\GR^{\infty} \times \mathbb{N}$ is uniformly	tight.
	By the Direct Prohorov Theorem \cite{Pro56} there exists a subsequence $\{\widetilde{Y}_{n_k}\}$ which is weakly convergent to some $\widetilde{Y}_{\infty}$. By the property (ii) of the Fernique-Skorokhod parametrization (Theorem \ref{th:Fern}), and relation (\ref{eq:eq}) we obtain
	\begin{align*}
		 1 &= \bP\big(\omega\sr \widetilde{Y}_{n_k}(\omega) \to \widetilde{Y}_{\infty}(\omega)\text{ in $\GR^{\infty} \times \mathbb{N}$}\big)\\
		  &= \bP\big(\omega\sr \widetilde{f}({Y}_{n_k}(\omega)) \to \Pi_{\GR^{\infty}}(\widetilde{Y}_{\infty}(\omega)), \Phi(\widetilde{f}({Y}_{n_k}(\omega)))\to \Pi_{\GN}(\widetilde{Y}_{\infty}(\omega))  \big).
	\end{align*}
	Set $\Omega_0 = \{ \omega\sr \widetilde{f}({Y}_{n_k}(\omega)) \to \Pi_{\GR^{\infty}}(\widetilde{Y}_{\infty}(\omega)\}$ and let $\omega_0 \in \Omega_0$. 
	Note that $\Phi(\widetilde{f}({Y}_{n_k}(\omega_0)))\to \Pi_{\GN}(\widetilde{Y}_{\infty}(\omega_0)) < +\infty$ implies that
	$\widetilde{f}({Y}_{n_k}(\omega_0)) \in \widetilde{f}(K_{M(\omega_0)})$, where 
	\[M(\omega_0) = \max\{\Phi(\widetilde{f}({Y}_{n_k}(\omega_0)))\sr k\in\GN\} < +\infty.\]
	 It follows that $\Pi_{\GR^{\infty}}(\widetilde{Y}_{\infty}(\omega_0)) \in \bigcup_{m=1}^{\infty} \widetilde{f}(K_m)$ and we can define $Y_{\infty}(\omega_0) = \widetilde{f}^{-1}\big(\Pi_{\GR^{\infty}}(\widetilde{Y}_{\infty}(\omega_0))\big)$. Moreover, $Y_{n_k}(\omega_0) \intau Y_{\infty}(\omega_0)$, since the convergence $\widetilde{f}(Y_{n_k}(\omega_0)) \to \widetilde{f}(Y_{\infty}(\omega_0))$ holds inside the compact set 
	$\widetilde{f}(K_{M(\omega_0)})$. 
	
	We have proved that $Y_{n_k} \intau Y_{\infty}$ almost surely. The strong tightness also holds, since $\Omega_0 \subset \bigcup_{r=1}^{\infty} \{\omega\sr M(\omega) \leq r\}$.
\end{proof}
As a corollary we obtain the strong version of the direct Prohorov theorem, extending \cite{Jak97S} to general submetric spaces.
\begin{corollary}\label{cor:DPT}
	If $\{\mu_i\}_{i\in\GI}$ is a uniformly $\tau$-tight family of probability measures on a submetric space $(\cX,\cK_{\tau})$, then in each sequence $\{\mu_n\} \subset \{\mu_i\}_{i\in\GI}$ one can find a further subsequence $\{\mu_{n_k}\}$ which admits the almost sure Skorokhod representation, i.e., there exists a sequence $\{Y_k\}$, $k=1,2,\ldots, \infty$ of $\cX$-valued random elements, defined on a common probability space $(\Omega,\cF,\bP)$, and such that
	\begin{description}
		\item{\bf (i)} $Y_k \sim \mu_{n_k}$,\quad $k\in\GN$;
		\item{\bf (ii)} $Y_k \intau Y_{\infty}$ $\bP$-almost surely.
	\end{description}
\end{corollary}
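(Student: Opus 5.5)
The corollary follows directly from Theorem \ref{th:SkPar}. I will take the probability space to be $(\Omega,\cF,\bP) = ([0,1],\zB_{[0,1]},\ell)$.

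Since $\{\mu_i\}_{i\in\GI}$ is uniformly $\tau$-tight, Theorem \ref{th:SkPar} provides a Skorokhod parametrization $\{Y_i\}_{i\in\GI}$ on this space with $Y_i\sim\mu_i$. Now fix a sequence $\{\mu_n\}\subset\{\mu_i\}_{i\in\GI}$ and look at the corresponding subsequence $\{Y_n\}$ of the parametrization. Property (ii) of the parametrization then gives a further subsequence $\{Y_{n_k}\}$ and a random element $Y_\infty$ on $[0,1]$ with $Y_{n_k}\to Y_\infty$ almost surely in compacts. Set $Y_k := Y_{n_k}$ for $k\in\GN$ and keep $Y_\infty$ as the limit.

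Property (i) of the corollary is immediate from $Y_{n_k}\sim\mu_{n_k}$. For property (ii), almost sure convergence in compacts includes by definition the condition $\bP(Y_{n_k}\intau Y_\infty)=1$, so (ii) holds; the strong tightness part of that definition is simply not needed here.

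The only point needing care is that $Y_\infty$ is a genuine $\cX$-valued random element, i.e.\ Borel measurable. In the proof of Theorem \ref{th:SkPar} it is defined as $\widetilde f^{-1}\circ\Pi_{\GR^\infty}\circ\widetilde Y_\infty$ on a set of full measure, and it can be set to a fixed point elsewhere. Its measurability follows from the measurable isomorphism property in Lemma \ref{lem:sigmacomp}. Since the definition of a Skorokhod parametrization already calls $Y_\infty$ a random element, I would just cite this rather than reprove it.
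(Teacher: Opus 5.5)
Your proposal is correct and is the intended argument. The paper gives no separate proof and presents the corollary as an immediate consequence of Theorem \ref{th:SkPar}: on $([0,1],\zB_{[0,1]},\ell)$ you pass to the subsequence provided by part (ii) of the parametrization and drop the strong-tightness half of convergence in compacts. Your check that $Y_\infty$ is measurable, via the measurable-isomorphism property in Lemma \ref{lem:sigmacomp}, is a harmless addition that the paper leaves implicit.
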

\begin{remark}\label{SCM}
	The role of weak topologies in deterministic existence problems (e.g., for solutions to PDEs) is well-established. Interestingly, methods based on Corollary~\ref{cor:DPT} (originally on \cite{Jak97S}) have become a standard tool in the stochastic analog of this theory. The stochastic compactness method, systematically presented in \cite{BFH18}, provides a general framework frequently applied to construct solutions to stochastic partial differential equations (SPDEs) and to study other problems involving complex stochastic processes, such as the existence of invariant measures.
\end{remark}

\section{The topology proper for uniform tightness in submetric spaces}\label{SecLaw}

In metric spaces, the direct Prohorov theorem asserts that uniform tightness of a family of probability measures implies relative compactness in the sequential topology of weak convergence. Corollary \ref{cor:DPT} shows that the same holds in submetric spaces. Indeed, if $(Y_k)$ is the Skorokhod representation for a selected subsequence $(\mu_{n_k})$, then for any $f \in CB(\cX,\tau)$, the Lebesgue dominated convergence theorem yields
\[ 
\lim_{k\to\infty} \int_{\cX} f(x)\,\mu_{n_k}(dx) = \lim_{k\to\infty} \bE[f(Y_k)] = \bE[f(Y_{\infty})] = \int_{\cX} f(x)\,\mu_{\infty}(dx).
\]
The converse statement fails in a manner suggesting that, in submetric spaces, weak convergence of probability measures is not the appropriate concept to consider in the context of uniform tightness.

Fernique \cite[Exemple 1.6.4, p.~24]{Fern67} constructed a sequence of probability measures $(\mu_n)$ on a separable Hilbert space $\GH$ equipped with the weak topology $\tau_w$ such that $\mu_n \Rightarrow_{\tau_w} \delta_0$, yet for every $R > 0$,
\[ 
\lim_{n\to\infty} \mu_n(\{x \sr \|x\| > R\}) = 1.
\]
In particular, no subsequence $(\mu_{n_k})$ of $(\mu_n)$ is uniformly $\tau_w$-tight.

In fact, the conclusion of Corollary \ref{cor:DPT} is much stronger than weak convergence of the underlying distributions. To see this, assume that a sequence $(\mu_n)$ of probability measures on $(\GH,\tau_w)$ admits a Skorokhod representation $Y_n$. 
For almost all $\omega$, we have $Y_n(\omega) \xrightarrow{\tau_w} Y_{\infty}(\omega)$, and therefore $\sup_n \|Y_n(\omega)\| < +\infty$. This implies the strong $\tau_w$-tightness of $\{Y_n\}$, and hence uniform $\tau_w$-tightness of $\{\mu_n\}$. Note that this conclusion is directly stated in Theorem \ref{th:SkPar} (relative compactness with respect to almost sure convergence in compacts).

Let $(\cX,\tau)$ be a submetric space. We will construct a sequential topology on the set $\cP(\cX,\tau)$ of $\tau$-tight probability measures that, in submetric spaces, is better suited to Theorem \ref{th:SkPar} than the weak topology.
For details on the general scheme for constructing sequential topologies from so-called $\cL$-convergences, we refer to Section 6 of \cite{Jak18}.

\begin{definition}  
	Let $(\cX,\tau)$ be a submetric space. We say that a sequence $\{\mu_n\} \subset \cP(\cX,\tau)$ converges to $\mu_{\infty}$ {\em via coupling} if, on some probability space $(\Omega,\cF,\bP)$, there exists a representation $\{Y_n \sim \mu_n \sr n = 1, 2, \ldots, \infty\}$ that converges $\bP$-almost surely in compacts. In this case, we write 
	$\mu_n \Rrightarrow \mu_{\infty}$.
\end{definition}

It is easy to see that $\Rrightarrow$ is an $\cL$-convergence in the sense of Fréchet.
Consequently, $\Rrightarrow$ defines a sequential topology $\tau(\Rrightarrow)$ on $\cP(\cX,\tau)$. It is straightforward to show that in metric spaces, the topology $\tau(\Rrightarrow)$ coincides with the standard sequential topology of weak convergence of probability measures on $\cP(\cX,\tau)$. Moreover, the topology $\tau(\Rrightarrow)$ on $\cP(\cX,\tau)$ is submetric. To see this, take a compatible metric $d$ and consider, for example, Prohorov's metric:
\[\rho(\mu,\nu) =\inf\bigl\{\varepsilon > 0 \sr \mu(A) \le \nu(A^{\varepsilon}) + \varepsilon \text{ and } \nu(A) \le \mu(A^{\varepsilon}) + \varepsilon \text{ for all } d\text{-closed sets } A \bigr\},\]
where $A^{\varepsilon} = \{x \in \cX \sr d(x,A) < \varepsilon\}$.

We conclude this section with several comments and a representative theorem illustrating the efficacy of the developed theory.

\begin{remark}\label{rem:rec}
	The topology $\tau(\Rrightarrow)$ defines in a natural way a new convergence $\stackrel*{\Rrightarrow}$, termed ``a posteriori'' after Urysohn, to distinguish it from the ``a priori'' convergence $\Rrightarrow$.
	
	By the Kantorovich--Vulich--Pinsker--Kisyński recipe (see \cite[Theorem 6.3]{Jak18}), $\mu_n \stackrel*{\Rrightarrow} \mu_0$ if and only if every subsequence $(\mu_{n_k})_{k\in\mathbb{N}}$ contains a further subsequence $(\mu_{n_{k_l}})_{l\in\mathbb{N}}$ such that $\mu_{n_{k_l}} \Rrightarrow \mu_0$.
\end{remark}

\begin{remark}
Being sequential, the topology $\tau(\Rrightarrow)$ is the finest among all topologies preserving the family of $\tau(\Rrightarrow)$-compact sets. 
See \cite[Theorem 3.11]{Jak26}.
\end{remark}

\begin{remark}
	The topology $\tau(\Rrightarrow)$ is the natural topology for convergence in law of random elements on submetric spaces. This perspective is strongly supported by Theorems 7.1 and 7.3 in \cite{Jak23}, to which we refer the reader for further context and details regarding $\tau(\Rrightarrow)$.
\end{remark}

\begin{remark}
	In light of Remark \ref{rem:rec}, it is natural to change the notion of convergence in law for random elements on submetric spaces. Let $\{X_n \sr n=1,2,\ldots, \infty\}$ be random elements taking values in a submetric space $(\cX,\tau)$ and with $\tau$-tight laws. The sequence $\{X_n\}$ is said to converge in law to $X_0$ if every subsequence $(X_{n_k})_{k\in\mathbb{N}}$ contains a further subsequence $(X_{n_{k_l}})_{l\in\mathbb{N}}$ that admits a Skorokhod representation converging almost surely in compacts.
	
	We propose writing 
	\[ 
	X_n \starindistdisttop{\tau} X_0,
	\]
	whereas 
	\[ 
	X_n \indistdisttop{\tau} X_0 
	\]
	indicates that $\{X_n\}$ itself admits a Skorokhod representation.
\end{remark}

\begin{theorem}
	Let $(\GH,\tau_w)$ be as in Example \ref{ExHilb}. Then a family $\{\mu_i \sr i\in I\}$ of probability measures on $(\GH,\tau_w)$ is uniformly $\tau_w$-tight if and only if it is relatively compact with respect to $\Rrightarrow$.
\end{theorem}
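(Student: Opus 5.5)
The plan is to prove the two implications separately. For the forward direction I would cite Theorem~\ref{th:SkPar} directly. For the converse I would use the fact, already used in the discussion of Fernique's example, that weakly convergent sequences in $\GH$ are norm-bounded.

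\emph{Uniform tightness implies relative compactness.} Take any sequence $\{\mu_n\}\subset\{\mu_i\}_{i\in I}$; it is again uniformly $\tau_w$-tight. By Theorem~\ref{th:SkPar}, the family admits a Skorokhod parametrization $\{Y_i\}$ on $([0,1],\zB_{[0,1]},\ell)$. Condition (ii) of that definition gives a subsequence $\{Y_{n_k}\}$ and a limit $Y_\infty$ with $Y_{n_k}\to Y_\infty$ almost surely in compacts. Let $\mu_\infty$ be the law of $Y_\infty$. It is $\tau_w$-tight: $Y_\infty(\omega)$ is a.s. a $\tau_w$-limit of points that, with probability $>1-\varepsilon$, all lie in the compact $K_\varepsilon$, and $K_\varepsilon$ is $\tau_w$-closed because $(\GH,\tau_w)$ is Hausdorff. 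By the definition of $\Rrightarrow$ we then get $\mu_{n_k}\Rrightarrow\mu_\infty$, which is the required sequential relative compactness.

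\emph{Relative compactness implies uniform tightness.} I would argue by contradiction. If the family is not uniformly $\tau_w$-tight, then, since the balls $B_R=\{\|x\|\le R\}$ are $\tau_w$-compact and every $\tau_w$-compact set is norm-bounded (uniform boundedness principle), Example~\ref{ExHilb} gives $\varepsilon>0$ and measures $\mu_n$ in the family with
\[
\mu_n(\{\|x\|>n\})>\varepsilon .
\]
By relative compactness there is a subsequence $\mu_{n_k}\Rrightarrow\mu_\infty$, with a representation $Y_k\sim\mu_{n_k}$ converging almost surely in compacts. The strong-tightness part of that convergence gives a compact $K$ with
\[
\bP(Y_k\in K\ \text{for all }k)>1-\varepsilon/2 .
\]
Because $K\subset B_R$ for some $R$, this yields $\mu_{n_k}(\{\|x\|>R\})<\varepsilon/2$ for all $k$, which contradicts the choice of $\mu_n$ once $n_k>R$.

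\emph{Main obstacle.} The delicate point is what ``relatively compact with respect to $\Rrightarrow$'' should mean: convergence along subsequences for the a priori $\Rrightarrow$, or for the sequential topology $\tau(\Rrightarrow)$. By Remark~\ref{rem:rec}, $\stackrel*{\Rrightarrow}$-convergence of a subsequence yields a further subsequence that converges under $\Rrightarrow$ itself, so the converse argument applies in either reading. Even without using the strong-tightness clause, the argument goes through: almost sure $\tau_w$-convergence already forces $\sup_k\|Y_k(\omega)\|<\infty$ a.s., which again bounds the norms uniformly with high probability.
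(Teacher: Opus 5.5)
Your proof is correct and follows the paper's route: the forward implication comes from Theorem~\ref{th:SkPar}, and the converse uses the same contradiction argument, showing that any $\Rrightarrow$-convergent subsequence must be uniformly $\tau_w$-tight. The only difference is minor. You get norm-boundedness from the strong-tightness clause of almost sure convergence in compacts, together with boundedness of weakly compact sets, whereas the paper uses $\sup_n\|Y_n(\omega)\|<\infty$ for almost surely $\tau_w$-convergent sequences (an alternative you also mention); your check that the limit law is $\tau_w$-tight is a detail the paper leaves implicit.
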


\begin{proof}
	Recall that a family $\{\mu_i \sr i\in I\}$ of probability measures on $(\GH,\tau_w)$ is uniformly $\tau_w$-tight if
	\[ 
	\lim_{R\to\infty} \sup_{i\in I} \mu_i(\{x \sr \|x\| > R\}) = 0.
	\]
	Necessity is established in Theorem \ref{th:SkPar}. To prove sufficiency, suppose that $\{\mu_i \sr i\in I\}$ is relatively compact with respect to $\Rrightarrow$, but there exist $\varepsilon > 0$ and a sequence $\{\mu_N\}$ such that $\mu_N(\{x \sr \|x\| > N\}) \ge \varepsilon$. In particular, no subsequence of $\{\mu_N\}$ can be uniformly $\tau_w$-tight. However, we have already shown that any sequence admitting a Skorokhod representation must be uniformly $\tau_w$-tight, which contradicts the assumed relative compactness.
\end{proof}

\begin{remark}
	In the special case where the submetric structure is induced by a countable family of point-separating continuous functions, the topology $\tau(\Rrightarrow)$ was constructed in \cite{Jak00}.
\end{remark}

\appendix
\section{Halmos' content in Hausdorff spaces}\label{ApHalm}
 In \cite[\S 53]{Halm50} Halmos gives a method of construction of Borel measures on locally compact spaces via  the notion of so-called ``content". In Halmos' reasoning there are parts essentially depending on the local compactness (e.g. the notion of ``regular content"), but most of the arguments use general measure-theoretic tools and  relations between compactness and openness in Hausdorff topological spaces. In this Appendix we provide an excerpt form Halmos' theory in order to construct {\em finite} measures on Hausdorff spaces.

In this Appendix $\big(\cX,\tau)$ is a Hausdorff space with $\tau$-compact subsets $\cK_{\tau}$ and $\tau$-open subsets $\cO_{\tau}$. Elements of $\cK_{\tau}$ are denoted by $K, K_1, K_2, C, C_1, C_2$, etc. Elements of $\cO_{\tau}$ are denoted by $U, U_1,U_2, V, V_1, V_2$, etc.

\begin{definition}[\bf Content]\label{halmcont}
	A $\tau$-content is a function $\lambda: \cK_{\tau} \to \GR^+$ with the following properties:
	\begin{enumerate}
		\item $0 \leq \lambda(K) \leq  \sup_{K\in \cK_{\tau}} \lambda(K) < +\infty$;
		\item If $K_1 \subset K_2$, then $\lambda(K_1) \leq \lambda(K_2)$;
		\item $\lambda(K_1 \cup K_2) \leq \lambda(K_1) + \lambda(K_2)$;
		\item If $K_1 \cap K_2 = \emptyset$, then $\lambda(K_1 \sqcup K_2) = \lambda(K_1) + \lambda(K_2)$.
	\end{enumerate}
\end{definition}

\begin{remark}
	We have always $\lambda(\emptyset) = 0$.
\end{remark}

\begin{definition}[\bf Inner content] \label{inncont}
	The inner $\tau$-content induced by $\lambda$ is a function $\lambda_*: \cO_{\tau}\to \GR^+$ defined by the formula
	\[ \lambda_*(U) = \sup \{\lambda(K)\sr K \subset U\}.\]
\end{definition}

\begin{theorem}{\bf \cite[Theorem A, p. 232]{Halm50}}
	The inner $\tau$-content $\lambda_*$ vanishes on $\emptyset$, is monotone, countably subadditive and countably additive.
\end{theorem}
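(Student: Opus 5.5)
The plan is to derive all four properties directly from Definitions \ref{halmcont} and \ref{inncont}. The only topological input is a decomposition lemma for compact sets in Hausdorff spaces, which is what replaces local compactness in Halmos' original argument.

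The first two properties are immediate. The only compact subset of $\emptyset$ is $\emptyset$, and $\lambda(\emptyset)=0$, so $\lambda_*(\emptyset)=0$. If $U\subset V$, every compact $K\subset U$ also lies in $V$, so $\lambda_*(U)\le\lambda_*(V)$. All values are finite, bounded by $\sup_{K}\lambda(K)<+\infty$.

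\emph{Decomposition lemma.} If $K\in\cK_\tau$ and $K\subset U_1\cup U_2$ with $U_1,U_2\in\cO_\tau$, then $K=C_1\cup C_2$ with $C_i\in\cK_\tau$ and $C_i\subset U_i$. To prove it, set $A_1=K\setminus U_2$ and $A_2=K\setminus U_1$. These are closed subsets of the compact $K$, hence compact, and they are disjoint because $K\subset U_1\cup U_2$. By the Hausdorff property there are disjoint open sets $W_1\supset A_1$ and $W_2\supset A_2$. Put $C_1=K\setminus W_2$ and $C_2=K\setminus W_1$; both are compact.
\begin{itemize}
\item A point of $C_1$ is not in $W_2$, hence not in $A_2$, hence lies in $U_1$. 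Symmetrically $C_2\subset U_2$.
\item $C_1\cup C_2=K\setminus(W_1\cap W_2)=K$, since $W_1\cap W_2=\emptyset$.
\end{itemize}
By induction the lemma extends to finitely many open sets $U_1,\ldots,U_n$.

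\emph{Countable subadditivity.} Let $U=\bigcup_n U_n$ and let $K\subset U$ be compact. By compactness $K\subset U_1\cup\dots\cup U_N$ for some $N$. The lemma gives $K=C_1\cup\dots\cup C_N$ with $C_i\subset U_i$ compact. Subadditivity of $\lambda$ then yields
\[
\lambda(K)\le\sum_{i\le N}\lambda(C_i)\le\sum_{n}\lambda_*(U_n).
\]
Taking the supremum over $K\subset U$ gives $\lambda_*(U)\le\sum_n\lambda_*(U_n)$.

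\emph{Countable additivity.} Let the $U_n$ be pairwise disjoint and fix $N$. Choose compacts $K_i\subset U_i$ for $i\le N$. They are pairwise disjoint, and their union is a compact subset of $U$. Finite additivity of $\lambda$ (property 4, iterated) and monotonicity of $\lambda_*$ give
\[
\sum_{i\le N}\lambda(K_i)=\lambda\Bigl(\bigsqcup_{i\le N}K_i\Bigr)\le\lambda_*(U).
\]
Taking suprema over the $K_i$ and then letting $N\to\infty$ gives $\sum_n\lambda_*(U_n)\le\lambda_*(U)$. Combined with subadditivity, this is equality.

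The main obstacle is the decomposition lemma. It is the one place where the structure of the space matters, and it needs only the Hausdorff separation of disjoint compact sets; everything else is bookkeeping with suprema.
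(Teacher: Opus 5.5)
Your proof is correct and takes essentially the same route as the paper, which gives no argument of its own but defers to Halmos' proof of Theorem A. There too the key step is the Hausdorff decomposition of a compact $K\subset U_1\cup U_2$ into compacts $C_i\subset U_i$, obtained by separating $K\setminus U_2$ and $K\setminus U_1$ with disjoint open sets; subadditivity then follows from a finite subcover and additivity from disjoint compacts, exactly as you do, so only the Hausdorff property is needed, as the appendix claims.
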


\begin{definition}[\bf The outer measure]\label{outmeas}
	The outer measure induced by $\lambda$ is defined by
	\[ \mu^*(E) = \inf\{ \lambda_*(U)\sr E \subset U\},\quad E \subset \cX.\]
\end{definition}

\begin{theorem}{\bf \cite[Theorems B and C, pp. 233--234]{Halm50}}
	\label{outhalm} The following statements hold:
\begin{description}
\item{\bf(i)} $\mu^*$ is an outer measure.
\item{\bf(ii)} $\mu^*(U) = \lambda_*(U), \quad U\in \cO_{\tau}$.
\item{\bf(iii)} $\mu^*(K) \geq \lambda(K), \quad K\in \cK_{\tau}$.
\end{description}
\end{theorem}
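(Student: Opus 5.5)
The plan is to derive all three statements directly from Definition~\ref{outmeas}, using only the properties of $\lambda_*$ listed in the preceding theorem (\cite[Theorem A, p. 232]{Halm50}): monotonicity and countable subadditivity on $\cO_{\tau}$, and $\lambda_*(\emptyset)=0$. The Hausdorff property enters only through that theorem, which I take as given. One useful preliminary fact: by property 1 of Definition~\ref{halmcont}, $\lambda_*$ is bounded by $\sup_{K}\lambda(K)<+\infty$. Hence $\mu^*$ is finite everywhere, and $\varepsilon$-approximations of infima are always available.

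For (i), I check the three axioms of an outer measure in turn.
\begin{itemize}
\item Since $\emptyset\in\cO_{\tau}$, we get $\mu^*(\emptyset)\le\lambda_*(\emptyset)=0$, because the only compact subset of $\emptyset$ is $\emptyset$ and $\lambda(\emptyset)=0$.
\item Monotonicity is immediate. If $E\subset F$, every open $U\supset F$ also contains $E$, so the infimum defining $\mu^*(E)$ runs over a larger family.
\item For countable subadditivity, fix $E_1,E_2,\ldots$ and $\varepsilon>0$. Choose open $U_n\supset E_n$ with $\lambda_*(U_n)\le \mu^*(E_n)+\varepsilon 2^{-n}$. Then $U=\bigcup_n U_n$ is open and contains $\bigcup_n E_n$. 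Countable subadditivity of $\lambda_*$ on open sets gives
\[
\mu^*\Big(\bigcup_n E_n\Big)\le\lambda_*(U)\le\sum_n\lambda_*(U_n)\le\sum_n\mu^*(E_n)+\varepsilon .
\]
Letting $\varepsilon\to 0$ finishes this step.
\end{itemize}

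For (ii), let $U\in\cO_{\tau}$. Taking $U$ itself as the covering open set gives $\mu^*(U)\le\lambda_*(U)$. Conversely, any open $V\supset U$ satisfies $\lambda_*(V)\ge\lambda_*(U)$ by monotonicity of $\lambda_*$: every compact $K\subset U$ is also a compact subset of $V$. Taking the infimum over such $V$ gives $\mu^*(U)\ge\lambda_*(U)$.

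For (iii), let $K\in\cK_{\tau}$ and let $U$ be any open set with $K\subset U$. Then $K$ is one of the compact sets in the supremum defining $\lambda_*(U)$, so $\lambda_*(U)\ge\lambda(K)$. Taking the infimum over all such $U$ yields $\mu^*(K)\ge\lambda(K)$.

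The only step with real content is the countable subadditivity in (i), and it reduces entirely to the countable subadditivity of $\lambda_*$ on open sets. That property is the part of \cite[Theorem A]{Halm50} that genuinely uses separation of compact sets by open sets, which is available in Hausdorff spaces. So I expect no obstacle beyond confirming that this cited result applies verbatim without local compactness.
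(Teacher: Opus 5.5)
Your proof is correct and follows essentially the same route as Halmos' Theorems B and C, which the paper cites without reproducing: countable subadditivity of $\mu^*$ via $\varepsilon 2^{-n}$-approximations combined with Theorem A, and (ii), (iii) read off directly from the definitions and the monotonicity of $\lambda_*$. As you note, the Hausdorff hypothesis enters only through Theorem A, and that argument does go through without local compactness: one splits a compact $C\subset U_1\cup U_2$ into compact pieces $C_i\subset U_i$ using disjoint open neighbourhoods of $C\setminus U_1$ and $C\setminus U_2$.
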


\begin{theorem}{\bf  \cite[Theorems D and E, pp. 234--235]{Halm50}}\label{halm}
	\begin{description}
\item{}  
\item{\bf(i)}	A subset $E \subset \cX$ is $\mu^*$-measurable if, and only if, for every $U \in \cO_{\tau}$ 
\begin{equation}\label{meas} \mu^*(U) \geq \mu^*(U\cap E) + \mu^*(U \cap E^c).
\end{equation}
\item{\bf(ii)} Compact sets $\cK_{\tau}$ are $\mu^*$-measurable.
	\end{description}
\end{theorem}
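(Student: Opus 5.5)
The plan is to follow Halmos' argument, using only three facts: the Carathéodory definition of $\mu^*$-measurability, Theorem \ref{outhalm}, and the fact that in a Hausdorff space compact sets are closed. Throughout, $\mu^*$ is finite, because $\lambda$ is bounded; this lets us subtract freely.

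For \textbf{(i)}, necessity is immediate: the Carathéodory condition, applied with test set $U\in\cO_{\tau}$, is exactly (\ref{meas}). For sufficiency, fix an arbitrary $A\subset\cX$ and $\varepsilon>0$. By Definition \ref{outmeas} we can choose $U\in\cO_{\tau}$ with $A\subset U$ and $\lambda_*(U)\le \mu^*(A)+\varepsilon$. By Theorem \ref{outhalm} (ii), $\mu^*(U)=\lambda_*(U)$. Monotonicity of $\mu^*$ and (\ref{meas}) then give
\[ \mu^*(A\cap E)+\mu^*(A\cap E^c)\le \mu^*(U\cap E)+\mu^*(U\cap E^c)\le \mu^*(U)\le \mu^*(A)+\varepsilon .\]
Letting $\varepsilon\to 0$ and combining with subadditivity yields the Carathéodory equality for every $A$.

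For \textbf{(ii)}, by (i) it suffices to fix $K\in\cK_{\tau}$ and $U\in\cO_{\tau}$ and prove $\mu^*(U)\ge \mu^*(U\cap K)+\mu^*(U\setminus K)$. Since $\cX$ is Hausdorff, $K$ is closed, so $U\setminus K$ is open. By Theorem \ref{outhalm} (ii) and Definition \ref{inncont}, for $\varepsilon>0$ there is a compact $C\subset U\setminus K$ with
\[ \lambda(C)\ge \lambda_*(U\setminus K)-\varepsilon=\mu^*(U\setminus K)-\varepsilon .\]
Again $C$ is closed, so $U\setminus C$ is open and contains $U\cap K$. Choose a compact $D\subset U\setminus C$ with
\[ \lambda(D)\ge \lambda_*(U\setminus C)-\varepsilon=\mu^*(U\setminus C)-\varepsilon\ge \mu^*(U\cap K)-\varepsilon .\]
Now $C$ and $D$ are disjoint compact subsets of $U$, and $C\sqcup D$ is compact. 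Property 4 of Definition \ref{halmcont} gives
\[ \mu^*(U)=\lambda_*(U)\ge \lambda(C\sqcup D)=\lambda(C)+\lambda(D)\ge \mu^*(U\setminus K)+\mu^*(U\cap K)-2\varepsilon .\]
Letting $\varepsilon\to0$ finishes the proof.

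The only delicate point is this two-stage choice of $C$ and then $D$. It is here that the Hausdorff assumption enters, since it makes $U\setminus K$ and $U\setminus C$ open. It is also here that the finite additivity of $\lambda$ on disjoint compacts is used, in place of Halmos' regularity of the content, which is unavailable without local compactness. Everything else is routine bookkeeping with finite quantities.
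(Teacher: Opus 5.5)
Your proof is correct and is essentially Halmos' own argument for Theorems D and E, which is all the paper relies on here, by citation: approximation of $A$ by an open superset for (i), and for (ii) the two-stage choice of compacts $C\subset U\setminus K$, $D\subset U\setminus C$ combined with additivity of $\lambda$; the adaptations to this setting (finiteness of $\mu^*$, compacts closed by the Hausdorff property, test sets ranging over all open sets) are handled correctly. One small correction to your closing commentary: Halmos' proof of Theorem E does not use regularity of the content either; regularity enters only later, to upgrade $\lambda(K)\le\mu(K)$ to equality, which is why the paper obtains only the inequality in Theorem \ref{apmain}.
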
 

As a consequence of Theorem \ref{halm} and Theorem \ref{outhalm} {(iii)} we obtain
\begin{theorem} \label{apmain}
A  $\tau$-content $\lambda: \cK_{\tau} \to \GR^+$ generates a finite measure $\mu$ on the thin $\sigma$-algebra $\zK_{\tau} = \sigma(\cK_{\tau})$ such that 
\[  \lambda(K) \leq \mu(K),\quad K\in \cK_{\tau}. \] 
\end{theorem}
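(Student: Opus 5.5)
The statement follows directly from Theorem~\ref{halm} and Theorem~\ref{outhalm}; the proof is essentially bookkeeping with the Carathéodory construction.

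\emph{Step 1: a measure from the outer measure.} By Theorem~\ref{outhalm}(i), $\mu^*$ is an outer measure on all subsets of $\cX$. By Carathéodory's theorem, the family $\cM$ of $\mu^*$-measurable sets is a $\sigma$-algebra, and $\mu^*$ restricted to $\cM$ is countably additive.

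\emph{Step 2: the $\sigma$-algebra $\zK_{\tau}$ lies inside $\cM$.} Theorem~\ref{halm}(ii) says every $K\in\cK_{\tau}$ is $\mu^*$-measurable, so $\cK_{\tau}\subset\cM$. Since $\cM$ is a $\sigma$-algebra, it follows that $\zK_{\tau}=\sigma(\cK_{\tau})\subset\cM$. Define $\mu=\mu^*|_{\zK_{\tau}}$; this is a measure on $\zK_{\tau}$.

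\emph{Step 3: finiteness.} The whole space $\cX$ is open, so Theorem~\ref{outhalm}(ii) gives
\[
\mu^*(\cX)=\lambda_*(\cX)=\sup_{K}\lambda(K)<+\infty
\]
by property 1 of Definition~\ref{halmcont}. Note that $\cX$ itself belongs to $\zK_{\tau}$ only as the complement of $\emptyset\in\cK_{\tau}$, which is allowed since $\zK_{\tau}$ is a $\sigma$-algebra. Hence $\mu$ is finite.

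\emph{Step 4: comparison with the content.} For $K\in\cK_{\tau}$, Theorem~\ref{outhalm}(iii) gives $\mu(K)=\mu^*(K)\ge\lambda(K)$.

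The only substantive point is Step~2, the measurability of compacts. That step is exactly Halmos' Theorem~E. Its proof uses only the Hausdorff separation of a compact set from a disjoint compact set by disjoint open sets, together with additivity on disjoint compacts. It does not use local compactness, so it is legitimately taken as given here.
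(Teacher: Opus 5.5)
Your proposal is correct and is the paper's argument: the paper's proof is the single line $\mu(A)=\mu^*(A)$ for $A\in\zK_{\tau}$. It relies on Theorem~\ref{halm}(ii) together with Carath\'eodory for the measurability of $\sigma(\cK_{\tau})$, and on Theorem~\ref{outhalm}(iii) for $\lambda\le\mu$. You have made explicit the Carath\'eodory step and the finiteness bound $\mu^*(\cX)=\lambda_*(\cX)=\sup_K\lambda(K)<+\infty$, which the paper leaves implicit.
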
 
\begin{proof} We set $\mu(A) = \mu^*(A)$, $A\in \zK_{\tau}$.
	\end{proof}

\section{Tight measures on submetric spaces}\label{sec:tight}

Measure theory on topological spaces provides many examples showing that intuitions drawn from the metric setting can be misleading. We refer to \cite[Chapter 7]{Bog07II} for a contemporary survey of this theory and its basic definitions (Borel, regular, tight, and Radon measures).

In submetric spaces, however, the situation simplifies and closely mirrors the metric case.

\begin{proposition}\label{prop: unif}
	Let $(\cX,\tau)$ be a submetric space with a family $\cK_{\tau}$ of $\tau$-compact sets and a family $\cO_{\tau}$ of $\tau$-open sets. Let $\mu$ be a probability measure on $\sigma(\cK_{\tau})$  such that
	\begin{equation}\label{eq:tight1} \sup_{K\in \cK_{\tau}} \mu(K) = 1.
	\end{equation}
	Then the following statements hold.
	\begin{description}
		\item{\bf (i)} There exists a Borel probability measure $\widetilde{\mu}$ which is an extension of $\mu$: 
		\[ \widetilde{\mu}(K) = \mu(K),\qquad K\in\cK_{\tau}.\]
		\item{\bf (ii)} The extension $\widetilde{\mu}$ is $\tau$-tight and $\tau$-regular, hence $\tau$-Radon. 
		\item{\bf (iii)} The measure $\widetilde{\mu}$ is the unique $\tau$-regular extension of $\mu$. 
		\item{\bf (iv)} The measure $\widetilde{\mu}$ is the unique $\tau$-Radon extension of $\mu$.
	\end{description}
\end{proposition}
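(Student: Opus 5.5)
Fix a $\tau$-continuous metric $d$ and choose compacts $K_1\subset K_2\subset\cdots$ in $\cK_{\tau}$ with $\mu(K_n)\to 1$; this is possible by (\ref{eq:tight1}), since finite unions of compacts are compact. Put $K_\infty=\bigcup_n K_n\in\sigma(\cK_{\tau})$, so $\mu(K_\infty)=1$.

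For (i) I would set
\[\widetilde{\mu}(B)=\lim_{n\to\infty}\mu(B\cap K_n),\qquad B\in\zB_{\tau}.\]
This requires $B\cap K_n\in\sigma(\cK_{\tau})$ for every Borel $B$, and this is the key step. On the compact $K_n$ the topologies $\tau$ and $\tau_d$ coincide: the identity $(K_n,\tau)\to(K_n,\tau_d)$ is a continuous bijection from a compact space onto a Hausdorff space, hence a homeomorphism. Thus if $F$ is $\tau$-closed, then $F\cap K_n$ is a closed subset of the compact $K_n$, hence $\tau$-compact. Therefore $\{B:\ B\cap K_n\in\sigma(\cK_{\tau})\ \forall n\}$ is a $\sigma$-algebra containing the closed sets, so it contains $\zB_{\tau}$. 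Countable additivity of $\widetilde{\mu}$ follows by monotone convergence of the increasing limits. To see $\widetilde{\mu}=\mu$ on $\sigma(\cK_{\tau})$, note that $\mu(A)-\mu(A\cap K_n)\le \mu(K_n^c)\to0$; in particular $\widetilde{\mu}(K)=\mu(K)$ for $K\in\cK_{\tau}$, and $\widetilde{\mu}$ is a probability measure.

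For (ii), I would prove inner regularity by compacts for every Borel $B$. Given $\varepsilon$, pick $n$ with $\mu(K_n^c)<\varepsilon$. The trace $(K_n,\tau)$ is a compact metric space with metric $d$, and $\widetilde{\mu}(\cdot\cap K_n)$ is a finite Borel measure on it, hence inner regular by closed (equivalently compact) sets. This gives a compact $C\subset B\cap K_n$ with $\widetilde{\mu}(B)\le \widetilde{\mu}(C)+2\varepsilon$. So $\widetilde{\mu}$ is Radon, and in particular tight. Outer regularity also follows, by taking complements: for a compact $C\subset B^c$, the set $C^c$ is open and contains $B$.

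For (iii) and (iv), suppose $\nu$ is another regular (respectively Radon) extension. Then $\nu(K)=\mu(K)$ for all compacts, so $\nu$ is tight as well. Regularity in the sense of \cite{Bog07II}, meaning approximation from inside by closed sets, combined with tightness gives approximation by compacts: for a closed $F$, $F\cap K_n$ is compact and $\nu(F\setminus F\cap K_n)\le\nu(K_n^c)\to0$. Hence $\nu(B)=\sup\{\nu(C): C\subset B \text{ compact}\}=\widetilde{\mu}(B)$. The case (iv) is immediate once (iii) is established, since Radon measures are regular.

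The main obstacle is the measurability step in (i), namely that Borel sets intersected with compacts lie in the thin $\sigma$-algebra; the coincidence of $\tau$ and $\tau_d$ on compacts handles it.
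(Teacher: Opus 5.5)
Your proof is correct and follows essentially the same route as the paper: restrict $\mu$ to an increasing sequence of compacts $K_n$, use that each $(K_n,\tau)$ is a compact metric space to get inner approximation by compacts (so $\widetilde{\mu}$ is tight, regular and Radon), and obtain uniqueness from the fact that a tight regular measure is Radon, together with the fact that Radon measures agreeing on compacts coincide. You also prove that $B\cap K_n\in\sigma(\cK_{\tau})$ for every Borel $B$, which the paper only asserts under ``well-defined''; that step needs just the fact that a $\tau$-closed subset of a compact set is compact, while the coincidence of $\tau$ and $\tau_d$ on $K_n$ is what you actually use in (ii).
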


\begin{proof}
	The proof is fairly standard and we provide it in order to highlight the subtleties of definitions from \cite{Bog07II}. 
	
	Let $K_n\in\cK_{\tau}$ be such that $K_n \subset K_{n+1}$ and $\mu(K_n) > 1- (1/n)$. Set $K_{\infty} = \bigcup_{n=1}^{\infty} K_n$. For $B \in \zB_{\tau} = \sigma(\cO_{\tau})$ define 
	\[ \widetilde{\mu}(B) = \mu(B\cap K_{\infty}).\]
	Then $\widetilde{\mu}$ is well-defined and for any $B \subset K_n^c$ we have $\widetilde{\mu}(B) \leq (1/n)$. This means that  $\widetilde{\mu}$ is $\tau$-tight in the sense of \cite[Definition 7.1.4, p. 69]{Bog07II}.
	
	To see that $\widetilde{\mu}$ is $\tau$-regular in the sense of  \cite[Definition 7.1.5, p. 69]{Bog07II}, take $B \in \zB_{\tau}$ and observe that
	\[ \widetilde{\mu}(B) = \lim_{n\to\infty} \mu(K_n \cap B).\]
	For $\varepsilon > 0$, let $n$ bo such that $\mu(K_n \cap B) > \widetilde{\mu}(B) - \varepsilon/2$. $K_n$ is a metric compact and therefore $\mu$ restricted to  $\zB_{K_n}$ is a  regular measure. Let $F \subset K_n \cap B$ be closed in the topology induced by $\tau$ on $K_n$ and such that $\mu(K_n\cap B \setminus F) < \varepsilon/2$. Then $F \subset B$ and $\widetilde{\mu}(B \setminus F) < \varepsilon$. Since $F$ is an intersection of a compact $K_n$ and some $\tau$-closed set and a submetric space is Hausdorff, $F$ is $\tau$-compact, hence also $\tau$-closed. We have got both the regularity of $\widetilde{\mu}$ as well as the fat that it is a Radon measure as defined in \cite[Definition 7.1.1(iii), p. 68]{Bog07II}. 
	
	The uniqueness of the Radon extension is obvious. The uniqueness of the regular extension is implied by the fact that any tight and regular measure is Radon. 
\end{proof}
\begin{remark}
It follows that, in submetric spaces and under condition (\ref{eq:tight1}), there is no ambiguity in referring to  $\mu$ as a tight (or $\tau$-tight) measure.
\end{remark}

\section{Random elements in submetric spaces and their distributions}\label{Ap:RandEl}

In general, establishing the measurability of a mapping $X : (\Omega,\cF,\bP) \to (\cX,\zB_{\tau})$, where $\zB_{\tau} = \sigma(\cO_{\tau})$, can be challenging. 
Checking whether $X^{-1}(K) \in \cF$ for $K \in \cK_{\tau}$ is typically much simpler, since $\tau$-compact sets are metrizable and thus contained in the Borel $\sigma$-algebra $\cB_{\tau_d}$ for some $\tau$-compatible metric $d$. Furthermore, controlling $\bP(X \in K)$ for $K \in \cK_{\tau}$ is sufficient to establish the uniform tightness of a family of distributions.

\begin{definition} 
	Let $\zK_{\tau}$ be the $\sigma$-algebra generated by all compact subsets of $\cX$. 
	\begin{enumerate}
		\item A \emph{random element} in a submetric space $(\cX,\tau)$ is a measurable mapping $X : (\Omega, \cF, \bP) \to (\cX, \zK_{\tau})$.
		\item The \emph{distribution} of a random element $X$ is the probability measure $\bP \circ X^{-1}$ on $\zK_{\tau}$. This distribution is $\tau$-\emph{tight} if, for each $\varepsilon > 0$, there exists a compact set $K_{\varepsilon}$ such that $\bP(X \in K_{\varepsilon}) > 1 - \varepsilon$.
		\item A family of random elements $\{X_i \mid i \in \GI\}$ on $(\cX,\tau)$ is \emph{uniformly $\tau$-tight} (or simply \emph{uniformly tight}) if, for each $\varepsilon > 0$, there exists a compact set $K_{\varepsilon}$ such that 
		\[\bP(X_i \in K_{\varepsilon}) > 1 - \varepsilon \quad \text{for all } i \in \GI.\]
	\end{enumerate}
\end{definition}

\begin{remark}
We denote the set of $\tau$-tight distributions by $\cP(\cX,\tau)$.	
\end{remark}

\begin{theorem} Let $X : (\Omega, \cF, \bP) \to (\cX,\zK_{\tau})$ be a random element with values in a submetric space $(\cX,\tau)$ and with $\tau$-tight distribution. Let $\widetilde{\cF}$ be the completion of $\cF$ with respect to $\bP$ and let $\widetilde{\bP}$ be the natural extension of $\bP$ on $\widetilde{\cF}$. 
	
	Then $X$ is a measurable mapping from $(\Omega, \widetilde{\cF})$ to   
	$(\cX,\zA_{\tau})$, where the $\sigma$-algebra $\zA_{\tau}$ consists of all sets $A$ such that $A\cap K \in \zB_{\tau}$ for each $\tau$-compact set $K$.

In particular, $X$ can be considered as a random element 
$ X : \big(\Omega, \widetilde{\cF},\widetilde{\bP}\big) \to (\cX, \zB_{\tau})$.	
\end{theorem}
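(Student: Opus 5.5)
The plan is to show that $X$ becomes measurable on the completion once we exhaust $\Omega$, up to a null set, by preimages of compact sets. By $\tau$-tightness of the distribution, I will choose $\tau$-compact sets $K_1\subset K_2\subset\cdots$ with $\bP(X\in K_n)>1-1/n$. This is possible because finite unions of compacts are compact and $X^{-1}(K)\in\cF$ for every $K\in\cK_{\tau}$, since $K\in\zK_{\tau}$. Put $K_{\infty}=\bigcup_n K_n$ and $\Omega_0=X^{-1}(K_{\infty})\in\cF$. Then $\bP(\Omega_0)=1$, and $\Omega\setminus\Omega_0$ is a $\bP$-null set, so every subset of it lies in $\widetilde{\cF}$.

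Now fix $A\in\zA_{\tau}$ and write
\[ X^{-1}(A)=\bigcup_{n}X^{-1}(A\cap K_n)\ \cup\ \big(X^{-1}(A)\setminus\Omega_0\big).\]
The second piece is a subset of a null set, hence in $\widetilde{\cF}$. It therefore suffices to show $X^{-1}(A\cap K_n)\in\cF$ for each $n$. By definition of $\zA_{\tau}$ we have $B:=A\cap K_n\in\zB_{\tau}$, and $B\subset K_n$. The key step, which I expect to be the main obstacle, is to show that Borel subsets of a compact set $K_n$ belong to $\zK_{\tau}$.

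For this key step I will use that $K_n$ is a metric compact for a $\tau$-compatible metric $d$, and that on $K_n$ the topology induced by $\tau$ coincides with that induced by $d$, since the identity map from $(K_n,\tau)$ to $(K_n,d)$ is a continuous bijection of a compact space onto a Hausdorff space. Hence the trace $\sigma$-algebra $\zB_{\tau}\cap K_n$ equals the Borel $\sigma$-algebra of the compact metric space $(K_n,d)$, which is generated by its closed subsets. Each such closed subset $F$ is compact in $(K_n,\tau)$ and hence $\tau$-compact, so it lies in $\cK_{\tau}$. Concretely, consider
\[ \{C\subset K_n: C\in\zK_{\tau}\}. \]
This family is a $\sigma$-algebra on $K_n$, because $K_n\setminus C=K_n\cap C^c\in\zK_{\tau}$, and it contains all relatively closed sets. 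It therefore contains $\zB_{\tau}\cap K_n\ni B$. Consequently $X^{-1}(B)\in\cF$, and $X^{-1}(A)\in\widetilde{\cF}$.

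Finally, $\zB_{\tau}\subset\zA_{\tau}$, since for $B\in\zB_{\tau}$ and compact $K$ we have $B\cap K\in\zB_{\tau}$; note that $K$ is Borel because it is closed in the Hausdorff space. Hence $X$ is $\widetilde{\cF}/\zB_{\tau}$-measurable, which gives the last assertion. The same decomposition also shows that $\widetilde{\bP}\circ X^{-1}$ restricted to $\zB_{\tau}$ is the tight extension of the distribution from Proposition \ref{prop: unif}.
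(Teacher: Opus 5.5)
The paper states this theorem without proof, so there is no argument of the author's to compare with. Your proof is correct and complete as written.

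It uses the same device as the proof of Proposition \ref{prop: unif}: exhaust by an increasing sequence of compacts $K_n$ and work inside $K_{\infty}=\bigcup_n K_n$. Your key step is that every Borel subset of a compact $K_n$ already lies in $\zK_{\tau}=\sigma(\cK_{\tau})$. That is exactly what the claim in Proposition \ref{prop: unif} that $\widetilde{\mu}(B)=\mu(B\cap K_{\infty})$ is ``well-defined'' silently needs. So your argument also fills that gap. Your closing remark, that $\widetilde{\bP}\circ X^{-1}$ on $\zB_{\tau}$ is the tight extension from that proposition, is therefore accurate.

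One simplification is possible: the metric $d$ is not needed in the key step. A relatively $\tau$-closed subset of $K_n$ is $\tau$-compact in any topological space. Also, the trace of $\zB_{\tau}$ on $K_n$ is always the $\sigma$-algebra generated by the relatively closed subsets of $K_n$. Your $\sigma$-algebra $\{C\subset K_n : C\in\zK_{\tau}\}$ therefore contains it directly, without identifying $\tau$ with $d$ on $K_n$. The Hausdorff property, automatic in a submetric space, is used only to know that compact sets are closed, hence Borel. This gives $\zK_{\tau}\subset\zB_{\tau}\subset\zA_{\tau}$.
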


\end{document}